\documentclass[12pt,a4paper]{article}
\usepackage{t1enc}
\usepackage[utf8]{inputenc}
\usepackage[english]{babel}
\usepackage{amsmath}
\usepackage{amssymb}
\usepackage{amsthm}
\usepackage{hyperref}
\usepackage{color}

\newcommand{\C}{\mathbb{C}}
\newcommand{\p}{\mathbb{P}}

\newcommand{\F}{\mathbb{F}}

\newcommand{\lra}{\longrightarrow}

\newcommand{\Lra}{\Longrightarrow}

\newcommand{\ff}{\mathcal{F}}

\newcommand{\zz}{\mathbb{Z}}

\newcommand{\D}{\mathcal D}

\newcommand{\kk}{\mathbb K}
\newcommand{\xx}{\mathcal X}
\newcommand{\yy}{\mathcal Y}

\newcommand{\cc}{\mathcal C}

\newcommand{\Char}{\operatorname{char}}

\newcommand{\aut}{\operatorname{Aut}}

\newcommand{\E}{\mathbb{E}}
\newcommand{\Ei}{\mathbb{E}_i}

\theoremstyle{plain}
\newtheorem{thm}{Theorem}[section]

\newtheorem{prop}[thm]{Proposition}
\newtheorem{lem}[thm]{Lemma}
\newtheorem{cor}[thm]{Corollary}
\newtheorem{rem}[thm]{Remark}

\title{Algebraic curves admitting automorphism groups of large prime square order}

\author{Nazar Arakelian and Diego Kian}

\begin{document}

\maketitle
\begin{abstract}
Let $\kk$ denote an algebraically closed field of arbitrary characteristic. In this paper we provide bounds for the size of a prime $\ell$ for which there are curves defined over $\kk$ admitting automorphism groups of order $\ell^2$. In addition, we also give a classification of the families of curves attaining the upper bounds for $\ell$ in both tame and wild case. Finally, we present the full automorphism groups of the curves attaining the highest bounds.
\end{abstract}

\emph{Keywords}: Algebraic curves, automorphism groups, algebraically closed field.

\section {Introduction}\label{intro}

Let $\xx$ be a (projective, algebraic, nonsingular, absolutely irreducible) curve of genus $g$ defined over an algebraically closed field $\kk$ of characteristic $p \geq 0$.  If $\kk(\xx)$ stands for the function field of $\xx$ over $\kk$, the full automorphism group of $\xx$, denoted by $\aut(\xx)$, consists of the group of the field automorphisms of $\kk(\xx)$ fixing $\kk$ elementwise with the composition operation. By \cite[Lemma 11.22]{HKT}, if $\xx \subset \p^n$ is nonsingular, then $\aut(\xx)$ acts on the points of $\xx$ as a linear collineation subgroup of $\operatorname{PGL}(n,\kk)$.

It is well known that $\aut(\xx)$ is infinite when $g=0$ and $g=1$. In contrast, if $g \geq 2$, then $\aut(\xx)$ is finite. More precisely, if $G \leq \aut(\xx)$ the Hurwitz bound states that
\begin{equation}\label{hb}
|G| \leq 84(g-1)
\end{equation}
with some exceptions occurring only when $\Char(\kk)>0$, see \cite[Theorem 11.56]{HKT}. As a matter of fact, one has $|G| < 16g^4$ up to one exception, which is the Hermitian curve defined by $y^q+y=x^{q+1}$, where $q=p^h$ for some integer $h \geq 1$ and $p=\Char(\kk)$. Over the last few decades these bounds were improved for certain types of groups or for curves over specific ground fields, specially for fields of positive characteristic, where the bound \eqref{hb} fails. For instance, if $G$ is an abelian group, we have that $|G|\leq 4g+4$, see e.g. \cite[Theorem 11.79]{HKT}. For cyclic groups, interesting results were obtained in \cite{IS}: curves defined over the complex numbers $\C$ admitting an automorphism of order $N >2g+1$ where classified. In \cite{DGT} this result was specialized for curves over a general algebraically closed field $\kk$. In the particular case where $\xx$ admits automorphism of prime order $\ell$, it follows by \cite{Ho} that either $\ell=2g+1$ or $\ell\leq g+1$; in this same reference, curves with an automorphism of order $2g+1$ are classified, and in \cite{AS} a classification  is provided for curves with an automorphism of order $g+1$. 

Other important results in positive characteristic are known for curves having solvable automorphism groups \cite{KM2,MS}, automorphism groups of prime power order \cite{Na,KM},  among other. It is worthwhile to mention that the classification of algebraic curves defined over fields $\kk$ of zero characteristic via automorphism groups are also of great importance. This can be seen in the particular case where $\kk=\C$ since algebraic curves over the complex field correspond to Riemann surfaces; in this direction, one can mention \cite{RS,Zo} and the references within.

In this paper we deal with algebraic curves $\xx$ of genus $g \geq 2$ defined over an algebraically closed field $\kk$ of arbitrary characteristic admitting automorphism groups $G$ of order $\ell^2$, where $\ell$ is a prime number. Specifically, we will provide bounds for the size of such a prime $\ell$ (Theorem \ref{main1}). As we will see, the upper bound in this specific case improves previous known bounds of \cite{Ho,Na,KM}.  Furthermore, we also give a classification of the families of curves attaining the upper bounds for $\ell$ in both tame and wild cases (Theorem \ref{main2}). The only curve attaining the upper bound for $\ell$ in the tame case is the Fermat curve $x^\ell+y^\ell=1$, and its full automorphism group is well known. We will finish the paper by presenting the full automorphism groups of the curves attaining the upper bound in the wild case as well (Theorem \ref{main3}).

Summarized, the main results of the paper are the following. 

\begin{thm}\label{main1}
Let $\xx$ be an irreducible algebraic curve of genus $g \geq 2$ defined over an algebraically closed field $\kk$ of characteristic $p \geq 0$.  Assume that there exists a subgroup $G\leq \aut(\xx)$ such that $|G|=\ell^2$, where $\ell$ is a prime number. Then 
$$
\ell \leq \frac{3+\sqrt{8g+1}}{2}.
$$
More precisely, we have:
\begin{itemize}
\item[(a)] If $\ell \neq p$, then either $\ell  = \frac{3+\sqrt{8g+1}}{2}$, or $\ell  = \frac{1+\sqrt{8g+1}}{2}$,  or 
$$
\ell  \leq \sqrt{2g+1}.
$$
\item[(b)] If $\ell=p>0$, then either $p=\frac{1+\sqrt{8g+1}}{2}$, or $p=\sqrt{2g+1}$ or
$$
p \leq \sqrt{g}+1.
$$
\end{itemize}
\end{thm}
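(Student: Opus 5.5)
The plan is to apply the Riemann--Hurwitz formula to the cover $\xx \to \xx/G$ and run a case analysis on the genus $\bar g$ of the quotient curve and on the short orbits of $G$. Since $|G|=\ell^2$, the group $G$ is abelian: either $C_{\ell^2}$ or $C_\ell\times C_\ell$. Every nontrivial stabilizer $G_P$ therefore has order $\ell$ or $\ell^2$. We write
\[
2g-2=\ell^2(2\bar g-2)+\sum_{O}\frac{\ell^2}{|G_P|}\,d_P ,
\]
where the sum runs over the short orbits $O$, $P$ is a point of $O$, and $d_P$ is the different exponent at $P$. The three cases $\bar g\ge 2$, $\bar g=1$ and $\bar g=0$ are treated separately. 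If $\bar g\geq 2$, then $\ell^2\le g-1$ immediately, which lies below every bound in the statement. So all the work is in $\bar g\in\{0,1\}$.

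\emph{Tame case ($\ell\neq p$).} Here $d_P=|G_P|-1$, and the stabilizers are cyclic. An orbit with stabilizer of order $\ell$ contributes $\ell^2-\ell$, and one with stabilizer of order $\ell^2$ contributes $\ell^2-1$; the latter occurs only if $G\cong C_{\ell^2}$. I will also use the standard generation constraint for abelian $G$ with $\bar g=0$ (resp. $\bar g=1$): the chosen generators of the stabilizers must multiply to the identity, and for $\bar g=0$ they must generate $G$.

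For $\bar g=0$ we need at least $3$ short orbits. With exactly $3$ orbits there are two possibilities.
\begin{itemize}
\item[(i)] All stabilizers have order $\ell$. This forces $G\cong C_\ell\times C_\ell$ and gives $2g-2=\ell^2-3\ell$, so $\ell=\frac{3+\sqrt{8g+1}}2$.
\item[(ii)] $G$ is cyclic. The product condition rules out exactly one stabilizer of order $\ell^2$, because two elements of order $\ell$ lie in the unique subgroup of order $\ell$. Hence two stabilizers have order $\ell^2$ and one has order $\ell$, giving $2g-2=\ell^2-\ell-2$, i.e. $\ell=\frac{1+\sqrt{8g+1}}2$. (Three stabilizers of order $\ell^2$ give a larger genus, hence a smaller $\ell$.)
\end{itemize}
With $4$ or more short orbits we get $2g-2\geq 2\ell^2-4\ell$, so $\ell\le 1+\sqrt g\le\sqrt{2g+1}$.

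For $\bar g=1$, the product condition in an abelian group forbids a single short orbit. Hence $2g-2\ge 2(\ell^2-\ell)$, which again gives $\ell\le\sqrt{2g+1}$. This proves (a), and the global bound $\ell\le\frac{3+\sqrt{8g+1}}2$ in the tame case.

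\emph{Wild case ($\ell=p$).} Now $d_P=\sum_{i\ge0}(|G_P^{(i)}|-1)$ in terms of the lower ramification groups. Since $G$ is abelian, I would use the Hasse--Arf theorem to control the jumps: if $G_P$ has order $p^2$, the upper jumps are integers, and this forces the lower jumps $i_1\le i_2$ to satisfy $i_2\equiv i_1 \pmod{p}$ with $i_2\geq i_1+p$ when the filtration is not constant. Jumps are prime to $p$ at the first stage.

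For $\bar g=0$ there must be at least one short orbit, and a wild cover of $\p^1$ with a single totally ramified point is the extremal configuration. I would run through the cases as follows.
\begin{itemize}
\item One orbit with $|G_P|=p^2$ and a single jump $i_1=1$. Then $d_P=2(p^2-1)$ and $g=0$, which is excluded.
\item The next admissible configurations give $d_P=(p^2-1)(i_1+1)$ or $d_P=(p^2-1)(i_1+1)+(p-1)(i_2-i_1)$. Plugging in the smallest admissible jumps should produce $g=p(p-1)/2$, the Hermitian-type subcover, and $g=(p^2-1)/2$. These correspond to $p=\frac{1+\sqrt{8g+1}}2$ and $p=\sqrt{2g+1}$ respectively.
\item Every other configuration should give $2g-2\ge 2p^2-4p$ or more, i.e. $p\le\sqrt g+1$. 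This covers larger jumps, two or more short orbits (each contributing at least $2(p^2-p)$), stabilizers of order $p$, and $\bar g=1$ with at least one short orbit.
\end{itemize}
Since $\sqrt{2g+1}$, $\frac{1+\sqrt{8g+1}}2$ and $\sqrt g+1$ are all less than $\frac{3+\sqrt{8g+1}}2$, the global bound follows in the wild case as well.

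The main obstacle is this wild bookkeeping. I must check precisely which pairs of lower jumps $(i_1,i_2)$ for $C_{p^2}$ versus $C_p\times C_p$ are allowed by Hasse--Arf, and confirm that the two exceptional genera are the only ones exceeding $\sqrt g+1$. I would first settle the single totally ramified orbit case, where the different formula gives an explicit quadratic in $p$, and then dispose of all remaining configurations by the crude bound $d_P\ge 2(|G_P|-1)$.
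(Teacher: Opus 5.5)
Your tame case is sound and takes a different route from the paper. The product-one condition for tame abelian covers of order prime to $p$ replaces the paper's appeals to [DGT, Prop.~10], Homma's theorem, and the cyclic-stabilizer argument for $r=1$. One small slip: $1+\sqrt g\le\sqrt{2g+1}$ fails for $g=2,3$, so there you need integrality of $\ell$ ($\ell\le 1+\sqrt g<3$ forces $\ell=2$).

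\textbf{The gap is in the wild case.} It is the configuration $\bar g=0$ with exactly one short orbit $\Omega$, of size $p$ ($r=0$, $s=1$ in the paper's notation). You file this under ``stabilizers of order $p$'' and propose to dispose of it with the crude bound $d_P\ge 2(|G_P|-1)$. With a single orbit of size $p$, that bound only gives $2g-2\ge -2p^2+2p(p-1)=-2p$, which says nothing. The exact different does not help either. With jump $m$ you get $2g-2=-2p^2+p(p-1)(m+1)$, and $m=2$ would give $g=(p-1)(p-2)/2$, i.e.\ $p=\frac{3+\sqrt{8g+1}}{2}$, violating (b). So this configuration has to be shown not to occur, and your plan has no argument for that. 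The product-one condition you relied on in the tame case is unavailable here, since Artin--Schreier covers of $\p^1$ branched at a single point do exist.

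\textbf{The missing idea is the paper's.} Because $G$ is abelian, every point of $\Omega$ has the same stabilizer $H$, of order $p$. For $P\in\Omega$, the inertia group of the image of $P$ in $\xx/H$ over $\xx/G$ is $G_PH/H$, which is trivial. All other points have trivial stabilizer in $G$. Hence $\xx/H\to\xx/G$ is an unramified cover of degree $p$ of the rational curve $\xx/G$. Riemann--Hurwitz then gives $2\tilde g-2=-2p$, which is impossible.

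Once this case is excluded, the rest of your wild bookkeeping goes through and recovers the paper's bounds:
\begin{itemize}
\item For a single totally ramified point, the explicit lower filtration with jumps congruent mod $p$ yields exactly $g=p(p-1)/2$, $g=(p^2-1)/2$, or $p\le\sqrt g+1$.
\item With at least two short orbits, and for $\bar g=1$, the crude bound suffices. It even replaces the paper's use of Proposition~\ref{nak}, since it gives $g\ge p^2$, $g\ge p^2-1$, or $g\ge p^2-p$ in those subcases.
\end{itemize}
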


The next result characterizes the curves attaining the bounds of Theorem \ref{main1}.

\begin{thm}\label{main2}
Let $\xx$ be an irreducible algebraic curve of genus $g \geq 2$ defined over an algebraically closed field $\kk$ of characteristic $p \geq 0$.  Assume that there exists a subgroup $G\leq \aut(\xx)$ such that $|G|=\ell^2$, where $\ell$ is a prime number. Then
\begin{itemize}
\item[(a)] Suppose that $\ell\neq p$.
\begin{itemize}
\item[(a.1)] $\ell  = \frac{3+\sqrt{8g+1}}{2}$ if and only if $\xx$ is birationally equivalent to the Fermat curve $$\ff: x^\ell+y^\ell=1.$$
\item[(a.2)] $\ell  = \frac{1+\sqrt{8g+1}}{2}$ if and only if  $\xx$ is birationally equivalent to a curve of the family  
$$
\mathcal{K}^{1}_{\alpha,\beta}: y^{\ell^2}=x^\alpha(x-1)^{\ell \beta},
$$
where $\alpha$ and $\beta$ are positive integers such that $\alpha +\beta \ell <\ell^2$ with $\ell \nmid \alpha$.
\item[(a.3)] $\ell=\sqrt{2g+1}$ if, and only if, $\xx$ is either birationally equivalent to a curve of the family 
$$
\mathcal{K}^{2}_{\alpha,\beta}: y^{\ell^2}=x^\alpha(x-1)^{\beta},
$$
where $\alpha$ and $\beta$ are positive integers such that $\alpha +\beta <\ell^2$ with $\alpha$, $\beta$ and $\alpha+\beta$ co-prime with $\ell$, or birationally equivalent to a curve of the family
$$
\mathcal{G}_a:ax^3y^3+x^3+y^3=1,
$$
with $a\in \kk^* $ or birationally equivalent to a curve of the family
$$
\mathcal{N}_a:(y^3-x^3)^2+(a-1)(x^3-ay^3)=0, 
$$
with $a \in \kk \backslash \{0,1\}$.
In the latter two cases, $\ell=3$ and $g=4$.
\end{itemize}
\item[(b)] Suppose that $\ell=p>0$.
\begin{itemize}
\item[(b.1)]  $p=\frac{1+\sqrt{8g+1}}{2}$ if and only if $p>2$ and $\xx$ is birationally equivalent to a curve of the family
$$
\yy_{a}:y^p-y=x^{p+1}+ax^2, 
$$
with $a \in \kk$.
\item[(b.2)]  $p=\sqrt{2g+1}$ if and only if $p>2$ and $\xx$ is birationally equivalent to a curve of the family
$$
\cc_{a,b}: \big((x^p-x)-a(y^p-y)\big)^2=b^2(y^p-y),
$$
where $a,b \in \kk$, with $a \notin \mathbb{F}_p$ and $b \neq 0$.
\item[(b.3)]  $p=\sqrt{g}+1$ if and only if $p>2$ and $\xx$ is birationally equivalent to a curve of the Artin-Mumford type
$$
\mathcal{M}_c: (y^p-y)(x^p-x)=c,
$$
with $c\in \kk^*$.
\end{itemize}
\end{itemize}
\end{thm}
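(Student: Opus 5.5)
The plan is to run the proof of Theorem \ref{main1} again, this time recording when each inequality is an equality. Set $\yy=\xx/G$, with genus $g'$, and apply the Riemann--Hurwitz formula to the cover $\xx\to\yy$ of degree $\ell^2$. In the tame case $\ell\neq p$, write $2g-2=\ell^2(2g'-2)+\sum_P(|G_P|-1)$. Every stabilizer $G_P$ is cyclic, of order $\ell$ or $\ell^2$. So each short orbit contributes $\ell^2-1$ (if it is a single point with $G_P$ cyclic of order $\ell^2$) or $\ell(\ell-1)$ (if it has $\ell$ points). If $g'\ge1$ then $\ell^2\le g$, far below all the thresholds. Hence $g'=0$, and the thresholds come from counting short orbits. $G$ is generated by the stabilizers of the branch points, and the product of suitable generators is trivial. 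Taking this into account:
\begin{itemize}
\item three orbits, each with stabilizer of order $\ell$ and $G\cong C_\ell^2$, give $2g-2=-2\ell^2+3\ell(\ell-1)$, i.e. $\ell=(3+\sqrt{8g+1})/2$;
\item $G$ cyclic with one totally ramified point and one orbit of stabilizer order $\ell$ gives $\ell=(1+\sqrt{8g+1})/2$;
\item $G$ cyclic with three totally ramified points, or $G\cong C_\ell^2$ with four orbits of stabilizer order $\ell$ and $\ell=3$, gives $\ell^2=2g+1$.
\end{itemize}
For the ``only if'' directions, I identify the function field in each configuration.

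\emph{Case (a.1).} $G\cong C_\ell\times C_\ell$ and $\yy=\p^1$ with three branch points, placed at $0,1,\infty$. Then $\kk(\xx)$ is a Kummer extension of $\kk(t)$ with group $(\zz/\ell)^2$. The branch data force it to be $\kk(t^{1/\ell},(1-t)^{1/\ell})$, which is the Fermat function field.

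\emph{Cyclic cases (a.2), (a.3).} Here $\kk(\xx)=\kk(x,y)$ with $y^{\ell^2}=\prod(x-a_i)^{m_i}$. The ramification index over $a_i$ is $\ell^2/\gcd(m_i,\ell^2)$. Normalizing the branch points to $0,1,\infty$ gives the exponent conditions stated in $\mathcal K^1_{\alpha,\beta}$ and $\mathcal K^2_{\alpha,\beta}$. In particular, in (a.2) the exponent at $1$ is divisible by $\ell$ exactly once, and the exponent at $\infty$ is $\alpha+\ell\beta$, which must be prime to $\ell$.

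\emph{Non-cyclic case with $\ell=3$, $g=4$.} Four branch points on $\p^1$ give a one-parameter family of $(\zz/3)^2$-covers. I would show that the generating vectors fall into two Hurwitz classes up to $\mathrm{Aut}(G)$, and match them to $\mathcal G_a$ and $\mathcal N_a$ by writing down the fixed points of the obvious order-3 automorphisms $(x,y)\mapsto(\omega x,y)$, etc.

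\emph{Converse directions.} These are direct genus computations on the given models.

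In the wild case $\ell=p$, I use the Deuring--Shafarevich formula: $\gamma-1=p^2(\gamma'-1)+\sum(|G_P|-|\text{orbit contribution}|)$, together with Riemann--Hurwitz including higher ramification groups. For a $p$-group of order $p^2$ with $g'=0$, the extremal cases are:
\begin{itemize}
\item \emph{(b.1)} One point fixed by all of $G$, with small lower ramification jumps, and $G$ elementary abelian containing the $C_p$ acting on the Artin--Schreier cover $y^p-y=f(x)$. Analysing the second Artin--Schreier step yields $\yy_a$; one must also rule out $p=2$ and the cyclic $C_{p^2}$ case, which is handled by Witt vector conductors.
\item \emph{(b.3)} Two fully ramified orbits for the two $C_p$ factors. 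Composing the two Artin--Schreier extensions $x^p-x=1/t$ and $y^p-y=c/(\ldots)$ gives the Artin--Mumford curve $\mathcal M_c$.
\item \emph{(b.2)} Intermediate ramification distribution, giving $\cc_{a,b}$ as a fibre product of Artin--Schreier covers.
\end{itemize}

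The main obstacle is the wild case, in particular (b.1) and (b.2). There one must pin down the higher ramification filtration of $G$ at the fixed point, and then normalize the pair of Artin--Schreier generators of $\kk(\xx)$ over $\kk(\xx)^G$ to the stated normal forms. My first attempt would be to fix a subgroup $H\cong C_p$ and use the fact that $\xx/H$ has a $C_p$-action with genus determined by the equality case. Applying the known classification of curves with a $p$-automorphism of maximal order relative to genus then reduces to $p$-rank-zero Artin--Schreier curves $y^p-y=f(x)$ with $\deg f$ small. From there I would lift the remaining automorphism by explicit computation.
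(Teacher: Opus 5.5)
The wild case (b.1)--(b.2) is not actually argued in your proposal, and your description of (b.2) is wrong. It does not come from an ``intermediate ramification distribution''. Both (b.1) and (b.2) arise from $g'=0$ with a single fixed point and no orbit of size $p$. They differ only in the lower filtration at that point: $|G_P^{(2)}|=\cdots=|G_P^{(p+1)}|=p$, $G_P^{(p+2)}=1$ in (b.1), versus $G_P^{(2)}=G$, $G_P^{(3)}=1$ in (b.2).

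The missing idea is how to turn this filtration into Artin--Schreier data. Take $\kk(\xx)^G=\kk(t)$, branched only at $t=\infty$. Each of the $p+1$ degree-$p$ subextensions is $\E_i=\kk(t,w_i)$, where $w_i^p-w_i$ is a polynomial of degree $m_i$ with $p\nmid m_i$, and $g(\E_i)=\frac{p-1}{2}(m_i-1)$. The paper determines the $m_i$ using $g=\sum_i g(\E_i)$. In (b.1) this gives $\sum_i(m_i-1)=p$; since two rational $\E_i$ would make $\kk(\xx)$ rational, $(m_i)=(1,2,\dots,2)$. In (b.2), transitivity of the different gives $m_i=2$ for all $i$. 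Herbrand's upper numbering, with $m_H$ the jump of $G^vH/H$, would also work.

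Only then can you normalize. Substituting $t=z^p-z$ into a quadratic Artin--Schreier equation gives $\yy_a$. In (b.2) you reach $y^p-y=t^2$, $x^p-x=at^2+bt$, and all $m_i=2$ forces $a\notin\F_p$. Your detour through a ``classification of curves with a $p$-automorphism of maximal order'' does not produce the $m_i$. The same genus decomposition is also what computes the genus of $\cc_{a,b}$ in the converse.

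Moreover, in (b.2) the case $b=0$ genuinely occurs. The hyperelliptic curve $t^2=w^{p^2}-w$ has genus $(p^2-1)/2$ and the group $\{w\mapsto w+c:\ c\in\F_{p^2}\}$ of order $p^2$. Every choice of $\E_1,\E_2$ there gives $b=0$. It is not birational to any $\cc_{a,b}$ with $b\neq 0$. A hyperelliptic involution commutes with $G$ and acts on $\kk(t)$ as a reflection $t\mapsto 2t_0-t$. This forces every $\E_i$ to be defined by a multiple of $(t-t_0)^2$, i.e.\ $b=0$. The paper's exclusion of $b=0$ tacitly uses $t\in\kk(x,y)$, which fails exactly here (only $t^2$ lies in $\kk(x,y)$). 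So (b.2) as stated cannot be proved; these hyperelliptic curves have to be added.

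In the tame case, your Kummer argument for (a.1) is cleaner than the paper's generalized-Fermat argument: $\kk(\xx)$ must be $\kk(t^{1/\ell},(1-t)^{1/\ell})$. Your two classes for the non-cyclic $\ell=3$ case match the paper's cases (I) and (II). However, two steps are wrong as written.

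First, ``if $g'\ge 1$ then $\ell^2\le g$'' fails for $g'=1$. Riemann--Hurwitz only gives $\ell(\ell-1)\le 2g-2$. With $g'=1$ and a single short orbit of size $\ell$ one gets $\ell^2=2g-2+\ell$, which is exactly $\ell=\sqrt{2g+1}$ when $\ell=3$, $g=4$. This case must be excluded separately, as the paper does. For $H\neq G_P$ of order $\ell$, $\xx/H\to\xx/G$ would be a cyclic Kummer cover of an elliptic curve branched at one point. That is impossible, since $\deg\operatorname{div}(f)=0$ forces $\ell\mid v_Q(f)$. Equivalently, in genus one the inertia generator is a commutator, hence trivial in abelian $G$.

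Second, the (a.2) configuration is two totally ramified points plus one orbit of size $\ell$ ($r=2$, $s=1$). With only one totally ramified point, Riemann--Hurwitz gives $2g-2=-\ell-1$, and your own product-one relation excludes it.
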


The full automorphism group of the Fermat curve $\ff$ is well known: If $n=p^r+1$ for some $r>0$, where $p=\Char(\kk)$, then $\aut(\ff) \cong \operatorname{PGU}(3,p^r)$ (see e.g. \cite[Proposition 11.30]{HKT}); otherwise $\aut(\ff)$ has a normal subgroup $H \cong \zz_\ell \times \zz_\ell$, and $\aut(\ff)/H \cong S_3$; in particular, $|\aut(\ff)|=6\ell^2$, see \cite[Theorem 11.31]{HKT}. In the case of the Artin-Mumford curve $\mathcal{M}_c$, we have from \cite[Theorem 7]{VM} that $\aut(\mathcal{M}_c) \cong (C_p \times C_p) \rtimes D_{p-1}$, where $C_p$ denotes a cyclic group of order $p$ and $D_{p-1}$ denotes a dihedral group of order $2(p-1)$. In turn, note that the curve $\yy_{0}$ is isomorphic to the Hermitian curve. Thus $\aut(\yy_{0}) \cong \operatorname{PGU}(3,p)$, see e.g. \cite{Le}. In general, concerning the curve $\yy_{a}$ we will prove the following result.

\begin{thm}\label{main3}
The curve $\yy_{a}:y^p-y=x^{p+1}+ax^2$ is birationally equivalent to the Hermitian curve $\mathcal{H}:u^{p+1}=v^p+v$ if and only if $a=0$. Furthermore, if $a\neq 0$ then $\aut(\yy_{a})$ fixes the only point at the infinity of $\yy_{a}$. More precisely, $\aut(\yy_{a})$ contains a unique normal Sylow $p$-subgroup $S_p$ which is isomorphic to the Heisenberg group of order $|S_p|=p^3$, and 
$$
\frac{\aut(\yy_{a})}{S_p} \cong \zz_{p-1}.
$$
In particular, $|\aut(\yy_{a})|=p^3(p-1)=4g^2+g(1+\sqrt{8g+1})$.
\end{thm}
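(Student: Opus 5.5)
The plan is to work with the genus and the structure of $\yy_a$ directly. First compute that $\yy_a$ has genus $g=p(p-1)/2$, consistent with $p=\frac{1+\sqrt{8g+1}}{2}$. It has a single point $P_\infty$ at infinity, and at $P_\infty$ the function $x$ has a pole of order $p$ and $y$ a pole of order $p+1$. We then write down explicit automorphisms fixing $P_\infty$:
\begin{itemize}
\item[(i)] $(x,y)\mapsto(x,y+c)$ with $c\in\F_p$;
\item[(ii)] maps $(x,y)\mapsto(x+b,\,y+\phi_b(x))$, where $\phi_b$ is a linear polynomial in $x$ chosen so that $\phi_b^p-\phi_b=(x+b)^{p+1}-x^{p+1}+a((x+b)^2-x^2)$ (using $b^p$-linearity; this forces $b\in\F_{p^2}$ with a compatibility condition that we expect still leaves $p^2$ choices);
\item[(iii)] the homotheties $(x,y)\mapsto(\lambda x,\lambda^2 y)$ with $\lambda^{p+1}=\lambda^2$, i.e.\ $\lambda^{p-1}=1$, so $\lambda\in\F_p^*$ (note that $ax^2$ scales correctly).
\end{itemize}
These generate a group $S_p\rtimes\zz_{p-1}$ of order $p^3(p-1)$, where $S_p$ is nonabelian of exponent $p$ (for $p>2$), hence Heisenberg. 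We check $p^3(p-1)=4g^2+g(1+\sqrt{8g+1})$ by substituting $g=p(p-1)/2$ and $\sqrt{8g+1}=2p-1$.

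Next we would show that $\aut(\yy_a)$ fixes $P_\infty$ when $a\neq0$. The Sylow $p$-subgroup of the stabilizer of $P_\infty$ has order $p^3$. By the Nakajima-type bound $|S|\le \frac{4p}{(p-1)^2}g^2$ for $p$-rank zero (we check that the $p$-rank is $0$, since the only ramification of $x$ is at $P_\infty$), $p^3$ is essentially extremal. Following Giulietti--Korchmáros ("large $p$-groups"), a curve with a $p$-subgroup of order $>\frac{2p}{p-1}g$ whose automorphism group does not fix a point is, under these size constraints, Hermitian. So if $\aut(\yy_a)$ moved $P_\infty$, the orbit would be nontrivial and one obtains the Hermitian curve, where $|\aut|=|\operatorname{PGU}(3,p)|$ is far larger. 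Hence the key reduction is: either $\aut(\yy_a)$ fixes $P_\infty$, or $\yy_a\cong\mathcal H$.

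To decide which, we use Weierstrass semigroups. Since $g=p(p-1)/2$ equals the genus of $\langle p,p+1\rangle$, the semigroup at $P_\infty$ is $\langle p,p+1\rangle$ for every $a$, so this alone does not separate them. Instead we use the fact that on $\mathcal H$ all $\F_{p^2}$-rational points are alike and the stabilizer of a point has order $p^3(p^2-1)$. On $\yy_a$, the full stabilizer $\aut(\yy_a)_{P_\infty}$ acts on the Riemann--Roch space $L((p+1)P_\infty)=\langle1,x,y\rangle$ linearly. A general $\sigma$ has the form $x\mapsto \lambda x+b$, $y\mapsto \mu y+\nu x+c$. Substituting into the equation and comparing coefficients of $x^{p+1}$ and $x^2$ forces $\mu=\lambda^{p+1}$ and $a\mu=a\lambda^2$. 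So for $a\ne0$, $\lambda\in\F_p^*$, giving exactly the order $p^3(p-1)$ and the structure claimed, whereas for $a=0$ one gets $\lambda\in\F_{p^2}^*$ and order $p^3(p^2-1)$. This yields the inequivalence when $a\neq0$: the stabilizer of a point of $\mathcal H$ with semigroup $\langle p,p+1\rangle$ has order $p^3(p^2-1)$, while if $\yy_a\cong\mathcal H$ then $P_\infty$ would correspond to such a point. For $a=0$ the substitution $x\mapsto$ a suitable scalar multiple makes $\yy_0\cong\mathcal H$. Normality of $S_p$ follows because it is the unique Sylow $p$-subgroup of the one-point stabilizer (kernel of the action on the tangent space), and the quotient is $\{\lambda\}\cong\zz_{p-1}$.

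The main obstacle is the global step, ruling out that $\aut(\yy_a)$ moves $P_\infty$ when $a\ne0$. I would try to cite a classification of curves with large $p$-subgroups and $p$-rank zero (for instance the result that $|S|=p^3$ with $g=p(p-1)/2$ forces either a fixed point for the whole group or Hermitian). Failing that, I would show that every point with semigroup $\langle p,p+1\rangle$ is $P_\infty$, by computing Weierstrass gap sequences at affine points.
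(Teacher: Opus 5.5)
Your argument is sound, and its global step is the same as the paper's. Since $\aut(\yy_a)_{P_\infty}^{(1)}$ has order at least $p^2>2g+1=p^2-p+1$, the paper applies \cite[Theorem 11.140]{HKT}: either $\aut(\yy_a)$ fixes $P_\infty$, or $\yy_a$ is Hermitian, DLS or DLR. That is the result you should cite; your Giulietti--Korchm\'aros attribution and the threshold $\frac{2p}{p-1}g$ are imprecise. You also need to discard the non-Hermitian exceptions explicitly. DLS, and any other $p=2$ case, is excluded by $p>2$. For $p=3$ one has $g=3$, which is not the genus of a DLR curve.

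The genuine differences are in the two local steps.

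\emph{Deciding when $\yy_a\cong\mathcal H$.} The paper uses Chang's theorem (birational nonsingular plane curves of degree $\ge 4$ are projectively equivalent). It combines this with $D_x^{(2)}y=-a$, which shows $\yy_a$ is nonclassical iff $a=0$. You instead compare one-point stabilizers. This works and avoids both Chang's theorem and Hasse derivatives. However, it rests on a fact you only assert: the point of $\mathcal H$ corresponding to $P_\infty$ has stabilizer of order $p^3(p^2-1)$. You can justify this in either of two ways:
\begin{itemize}
\item cite that the Weierstrass points of $\mathcal H$, which are the only points with semigroup $\langle p,p+1\rangle$, are exactly its $\F_{p^2}$-rational points;
\item note that your group of order $p^3$ fixing $P_\infty$ would be a Sylow $p$-subgroup of $\operatorname{PGU}(3,p)$, whose unique fixed point on $\mathcal H$ is $\F_{p^2}$-rational.
\end{itemize}

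\emph{Determining the stabilizer.} The paper uses that automorphisms of the smooth plane model are projectivities fixing $(0:1:0)$. It then invokes Nakajima's bound $|S_p|\le\frac{4p}{(p-1)^2}g^2=p^3$ for $p$-rank zero. Your Riemann--Roch argument, using $L(pP_\infty)=\langle 1,x\rangle$ and $L((p+1)P_\infty)=\langle 1,x,y\rangle$, is more intrinsic. It yields the whole stabilizer at once, provided you include the $y$-coefficient condition $\mu^p=\mu$, so Nakajima's bound becomes unnecessary. The paper, though, extracts Corollary \ref{corm} from that step.

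One correction: in (ii), $b$ need not lie in $\F_{p^2}$. Matching coefficients gives $\phi_b=\kappa x+\epsilon$ with $b=\kappa^p$, $\kappa^{p^2}+2a\kappa^p+\kappa=0$ and $\epsilon^p-\epsilon=\kappa^{p(p+1)}+a\kappa^{2p}$. This still gives $p^2$ values of $b$, because the additive polynomial is separable. Your fallback via gap sequences at affine points is not needed.
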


\begin{rem}
In \cite[Example 3.7]{BNZ} it is mentioned that the curve defined by $y^q-y=x(x^q-x)$, where $q$ is a power of $p=\Char(\kk)$ is well known, and it is known that its automorphism group has order $q^3(q-1)$. It is possible that the proof of Theorem \ref{main3} could also be derived from the proof of this result. However, we did not find the proof of such statement in the literature. 
\end{rem}

We say that a curve $\xx$ of genus $g$ has large automorphism group if $|\aut(\xx)|$ exceeds the Hurwitz bound, that is, if $|\aut(\xx)|>84(g-1)$. From Theorem \ref{main3} the curves $\yy_{a,b}$ has always a large automorphism group for a sufficiently large value of $g$.

When $\Char(\kk)=p>0$, we have associated to a curve $\xx$ defined over $\kk$ the so called $p$-rank of $\xx$, which is denoted by $\gamma(\xx)$, and it is defined as the dimension of the $\F_p$-vector space of the $p$-torsion points of the Jacobian variety $\operatorname{Jac}(\xx)$ of the curve $\xx$. In \cite[Theorem 1]{Na}, Nakajima provided a bound for the order of the Sylow $p$-subgroups of curves $\xx$ defined over $\kk$ that depends on the $p$-rank of $\xx$. A curve attaining such bound is called a Nakajima extremal curve. The next result is a bi-product of the proof of Theorem \ref{main3}.

\begin{cor}\label{corm}
For all $a,b \in \kk$, the curve $\yy_{a}:y^p-y=x^{p+1}+ax^2$ has zero $p$-rank and it is Nakajima extremal.
\end{cor}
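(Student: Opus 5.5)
Both claims follow from the Deuring--Shafarevich formula applied to a $p$-subgroup of $\aut(\yy_a)$ of order $p^3$, together with Nakajima's bound. Recall \cite[Theorem 1]{Na}: if $\gamma(\xx)=0$ and $g\geq 2$, every $p$-subgroup $S\leq\aut(\xx)$ satisfies
$$
|S|\leq \frac{4p}{(p-1)^2}\,g^2,
$$
and $\xx$ is Nakajima extremal when some $p$-subgroup attains this bound.

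\textbf{Genus and the group.} First compute $g(\yy_a)$. The curve is an Artin--Schreier cover $y^p-y=f(x)$ of $\p^1$ with $f=x^{p+1}+ax^2$. Since $\gcd(p+1,p)=1$, $f$ cannot be modified to lower its degree at infinity, so the cover is ramified only at $x=\infty$, with conductor exponent $p+1$. Hence
$$
g=\frac{(p-1)p}{2},
$$
which agrees with $p=\frac{1+\sqrt{8g+1}}{2}$. Next exhibit a $p$-group $S$ of order $p^3$ inside $\aut(\yy_a)$, built from maps of the form
$$
(x,y)\mapsto (x+b,\; y+\lambda(x)+c), \qquad b^{p^2}=b,
$$
with $\lambda$ linear and chosen so that $f(x+b)-f(x)\equiv \lambda(x)^p-\lambda(x)$ modulo constants. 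This is exactly what the proof of Theorem \ref{main3} provides, namely the Heisenberg group $S_p$, and I will take it from there. This $S$ fixes the unique point $P_\infty$ at infinity and acts on the affine points.

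\textbf{$p$-rank zero.} Apply Deuring--Shafarevich to $S$:
$$
\gamma(\yy_a)-1=|S|\,\bigl(\gamma(\yy_a/S)-1\bigr)+\sum_{Q}\bigl(e_Q-1\bigr).
$$
The quotient $\yy_a/S$ is rational, since it is dominated by $\yy_a/\langle (x,y)\mapsto(x,y+1)\rangle\cong\p^1$; therefore $\gamma(\yy_a/S)=0$. The only ramified point of $\yy_a\to\yy_a/S$ is $P_\infty$, with $e=p^3$. The reason is that every nontrivial element of $S$ acts fixed-point-freely on the affine part: translations in $x$ move $x$, and those with $b=0$ move $y$. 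Substituting,
$$
\gamma-1=-p^3+(p^3-1)=-1,
$$
so $\gamma(\yy_a)=0$. Alternatively, the same conclusion follows from Deuring--Shafarevich applied directly to the degree-$p$ cover $\yy_a\to\p^1_x$, which is totally ramified at one point only: $\gamma-1=p(0-1)+(p-1)=-1$.

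\textbf{Extremality.} With $g=p(p-1)/2$, Nakajima's bound becomes
$$
\frac{4p}{(p-1)^2}\cdot\frac{p^2(p-1)^2}{4}=p^3.
$$
The group $S$ has exactly this order, so $\yy_a$ is Nakajima extremal.

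The main obstacle is exhibiting $S$ explicitly and checking that it has order $p^3$. Concretely, one must solve $f(x+b)-f(x)\equiv\lambda^p-\lambda$; the term $2abx$ forces a careful choice of $\lambda$ when $a\neq 0$. I rely on Theorem \ref{main3} for this step. Note that the $p$-rank argument itself does not need $S$ at all.
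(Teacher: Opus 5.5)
Your argument is correct and essentially matches the paper's. You get $\gamma=0$ from Deuring--Shafarevich over a rational quotient with a single totally ramified point; the paper uses the order-$p^2$ group $G$, while you use $S_p$ or $\langle (x,y)\mapsto(x,y+1)\rangle$, and the computation is the same. You then get extremality from $\frac{4p}{(p-1)^2}g^2=p^3$, which is attained by the order-$p^3$ group $\{\sigma_{1,\kappa,\epsilon}\}$ built in the proof of Theorem \ref{main3}.

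One slip, harmless because you defer to that proof: the $x$-translations are $b=\kappa^p$ with $h(\kappa)=0$, i.e.\ $b^{p^2}+2a^pb^p+b=0$ (so $b^{p^2}=-b$ when $a=0$), not $b^{p^2}=b$. Also note that this construction of the group never uses $a\neq 0$, so it covers the case $a=0$ as well.
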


\begin{rem}
Using techniques from the theory of Riemann surfaces, Zomorrodian establishes in \cite[Theorem 1.1.2]{Zo} a bound for the size of $\ell$-Sylow subgroups $S_\ell$ of $\aut(\xx)$  where $\xx$ is a compact Riemann surface of genus $g \geq 2$, namely $|S_2| \leq 16(g-1)$, $|S_3| \leq 9(g-1)$ and $|S_\ell| \leq \frac{2\ell(g-1)}{\ell-3}$ when $\ell >3$. It should be noted that such a bound for $\ell>3$ coincides with the one given in Theorem \ref{main1}(a) for $\kk=\C$ .
\end{rem}

\section{Background}\label{back}

Our notation and terminology are standard. For an exhaustive treatise of the theory of curves and algebraic function fields, the reader is referred to \cite{HKT} and \cite{St}. Let $\xx$ be a (projective, geometrically irreducible, algebraic) curve defined over an algebraically closed field $\kk$ of characteristic $p \geq 0$.  We denote by $\kk(\xx)$ the function field of $\xx$. By a point $P \in \xx$ we mean a point in a non-singular model of $\xx$; in this way, we have a one-to-one correspondence between points of $\xx$ and places of $\kk(\xx)$. 

Let $\aut(\xx)$ denote the full automorphism group of $\xx$. For a subgroup $G$ of $\aut(\xx)$, we denote by $\kk(\xx)^G$ the fixed field of $G$. A non-singular model $\bar{\xx}$ of  $\kk(\xx)^G$ is referred to as the quotient curve of $\xx$ by $G$ and denoted by $\xx/G$. The field extension $\kk(\xx):\kk(\xx)^G$ is Galois with Galois group $G$. For a point $P \in \xx$, $G(P)$ stands for the orbit of $P$ under the action of $G$ on $\xx$ seen as a point-set. The orbit $G(P)$ is said to be long if $|G(P)| = |G|$, short otherwise. There is a one-to-one correspondence between short orbits and ramified points in the extension $\kk(\xx):\kk(\xx)^G$. The group $G$ might have no short orbits; if this is the case, the cover $\xx \rightarrow \xx/G$ (or equivalently, the extension $\kk(\xx):\kk(\xx)^G$) is unramified. 

For $P \in \xx$, the subgroup $G_P$ of $G$ consisting of all elements of $G$ fixing $P$ is called the stabilizer of $P$ in $G$. For a non-negative integer $i$, the $i$-th ramification group of $\xx$ at $P$ is denoted by $G_P^{(i)}$, and defined by
$$
G_P^{(i)}=\{\sigma \in G_P \ | \ v_P(\sigma(t)-t)\geq i+1\}, 
$$
 where $t$ is a local parameter at $P$ and $v_P$ is the respective discrete valuation. Here we have a filtration
 $$
 G_P^{(0)} \geq G_P^{(1)} \geq \cdots \geq G_P^{(i)} \geq \cdots,
 $$
 and $G_P^{(j)}$ is trivial for some $j\geq 0$. Moreover, $G_P=G_P^{(0)}$, $G_P^{(1)}$ is the unique Sylow $p$-subgroup of $G_P^{(0)}$, and the factor group $G_P^{(0)}/G_P^{(1)}$ is cyclic of order prime to $p$, see \cite[Theorem 11.74]{HKT}. In particular, if $p=0$ then $G_P^{(0)}$ is cyclic and $G_P^{(j)}$ is trivial for $j>0$, and when $p>0$ and $|G_P|$ is a power of  $p$, then $G_P=G_P^{(0)}=G_P^{(1)}$. For a point $P \in \xx$, the ramification index of $P$ is defined as   $e_P := |G^{(0)}_P|$ and the different exponent of $P$ is 
 $$
 d_P := \sum_{i = 0}^{\infty}(|G_P^{(i)}|- 1).
 $$ 
 
 Let $g$ and $\bar{g}$ be the genus of $\xx$ and $\bar{\xx}=\xx/G$, respectively. The Riemann-Hurwitz genus formula is 
\begin{equation}\label{rhg}
2g-2=|G|(2\bar{g}-2)+\sum_{P \in \xx}d_P, 
\end{equation}
see \cite[Theorem 11.72]{HKT}.
 If $\ell_1,\ldots,\ell_k$  are the sizes of the short orbits of $G$, then (\ref{rhg}) yields
\begin{equation}\label{rhso}
2g-2 \geq |G|(2\bar{g}-2)+\sum_{\nu=1}^{k} \big(|G|-\ell_\nu\big),
\end{equation}
and equality holds if $\gcd(|G_P|,p)=1$ for all $P \in \xx$; see \cite[Theorem 11.57 and Remark 11.61]{HKT}. 

Let $\gamma$ be the $p$-rank of $\xx$. Then $0 \leq \gamma \leq g$, and when $\gamma=g$ the curve $\xx$ is \emph{ordinary} or \emph{general}; see \cite[Theorem 6.96]{HKT}.
For a $p$-subgroup $S$ of $\aut(\xx)$, the Deuring-Shafarevich formula is
\begin{equation}\label{dsg}
\gamma-1=|S|(\bar{\gamma}-1)+\sum_{\nu=1}^{k} \big(|S|-\ell_\nu);
\end{equation}
where $\bar{\gamma}$ is the $p$-rank of $\bar{\xx}=\xx/S$ and $\ell_1,\ldots,\ell_k$  are the sizes of the short orbits of $S$; see \cite[Theorem 11.62]{HKT}. 

We now state some results that will be useful in the sequel of the paper. 

\begin{prop}\cite[Lemma 11.75]{HKT}\label{congp}
Assume $p>0$ and let $P \in \xx$. The integers $k >0$ satisfying $G_P^{(k)} \neq G_P^{(k+1)}$ are all congruent modulo $p$.
\end{prop}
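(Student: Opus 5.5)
The plan is to compare every jump with the \emph{last} jump, using a commutator computation in the power series ring $\kk[[t]]$, where $t$ is a local parameter at $P$. Since $G_P^{(i)}$ is trivial for large $i$, there is a largest integer $m\ge 1$ with $G_P^{(m)}\neq G_P^{(m+1)}$. The aim is to show that every other jump $i\geq 1$ satisfies $i\equiv m \pmod p$, which gives the statement. Throughout, elements of $G_P^{(1)}$ act on the completion $\kk[[t]]$. For $\sigma\in G_P^{(i)}$ with $i\ge 1$ I write
$$\sigma(t)=t+a_\sigma t^{i+1}+(\text{higher order terms}),$$
so that $\sigma\notin G_P^{(i+1)}$ exactly when $a_\sigma\neq 0$.

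\textbf{Step 1 (commutator formula).} Let $\sigma\in G_P^{(i)}$ and $\tau\in G_P^{(j)}$ with $i,j\geq 1$. I will show that $[\sigma,\tau]=\sigma\tau\sigma^{-1}\tau^{-1}\in G_P^{(i+j)}$, and that the coefficient of $t^{i+j+1}$ in $[\sigma,\tau](t)-t$ equals $(j-i)\,a_\sigma a_\tau$ up to sign. This is a direct expansion.
\begin{itemize}
\item Compute $\sigma(\tau(t))$ and $\tau(\sigma(t))$ modulo $t^{i+j+2}$, using $\sigma(t^{k})=t^{k}+k\,a_\sigma t^{k+i}+\cdots$.
\item This gives $\sigma\tau(t)-\tau\sigma(t)\equiv (j-i)a_\sigma a_\tau\, t^{i+j+1}$ modulo $t^{i+j+2}$.
\item Substituting $\tau\sigma(t)$ for $t$ converts this into the required statement about $[\sigma,\tau](t)-t$, since $\sigma^{-1}$ and $\tau^{-1}$ only perturb higher-order terms.
\end{itemize}
This bookkeeping of truncated power series is the main technical obstacle. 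I would handle it by working with the map $\sigma\mapsto \sigma(t)/t-1$ and only keeping track of leading terms, as in Serre's \emph{Local Fields}, Ch.~IV.

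\textbf{Step 2 (conclusion).} Suppose $i\ge 1$ is a jump with $i\not\equiv m\pmod p$.
\begin{itemize}
\item Choose $\sigma\in G_P^{(i)}\setminus G_P^{(i+1)}$, so $a_\sigma\neq0$.
\item Choose $\tau\in G_P^{(m)}\setminus G_P^{(m+1)}$, so $a_\tau\neq 0$. Because $m$ is the last jump, $G_P^{(m+1)}$ is trivial and $\tau$ is simply any nontrivial element of $G_P^{(m)}$.
\item By Step 1, $[\sigma,\tau]\in G_P^{(i+m)}$ with leading coefficient $\pm(m-i)a_\sigma a_\tau$. This is nonzero in $\kk$ because $p\nmid (m-i)$.
\item Hence $[\sigma,\tau]\neq 1$, while $[\sigma,\tau]\in G_P^{(i+m)}\subseteq G_P^{(m+1)}=\{1\}$ because $i\geq 1$. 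This is a contradiction.
\end{itemize}
Therefore all jumps $k>0$ are congruent to $m$ modulo $p$, and hence to each other.
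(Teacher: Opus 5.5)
Your proof is correct. The paper gives no argument of its own for this statement; it simply imports it from \cite[Lemma 11.75]{HKT}. What you wrote is the standard proof, as in Serre, \emph{Local Fields}, Ch.~IV, \S2: for $\sigma\in G_P^{(i)}$ and $\tau\in G_P^{(j)}$ the commutator lies in $G_P^{(i+j)}$ with leading coefficient $\pm(j-i)a_\sigma a_\tau$. Comparing each jump $i$ with the last jump $m$, where $G_P^{(m+1)}=\{1\}$, then forces $p\mid m-i$.

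The one soft spot is the justification in the third bullet of Step~1. The reason is not that $\sigma^{-1}$ and $\tau^{-1}$ ``only perturb higher-order terms''. It is the identity $[\sigma,\tau]\,\tau\sigma=\sigma\tau$, which gives, for the local parameter $u=\tau\sigma(t)\equiv t \pmod{t^2}$,
$$[\sigma,\tau](u)-u=\sigma\tau(t)-\tau\sigma(t).$$
For any $\rho\in G_P^{(1)}$ and such a $u$, the order and leading coefficient of $\rho(u)-u$ coincide with those of $\rho(t)-t$; this is the same fact that makes $G_P^{(k)}$ independent of the choice of local parameter.

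Step~2 needs even less than you use. The inclusion $[\sigma,\tau]\in G_P^{(i+m)}\subseteq G_P^{(m+1)}=\{1\}$ already gives $\sigma\tau(t)=\tau\sigma(t)$. This contradicts the nonzero coefficient $(m-i)a_\sigma a_\tau$ of $t^{i+m+1}$, so you do not need the leading coefficient of the commutator itself. Finally, note that if $G_P^{(1)}$ is trivial there are no jumps and the statement is vacuous; otherwise your $m$ exists.
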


\begin{prop}\cite[Theorem 11.78]{HKT}\label{nak}
Assume $p>0$ and let $P \in \xx$, where $\xx$ has positive genus $g$. Let $G_P$ be a $\kk$-automorphism group fixing $P$. Let $F_i$ be the subfield of $\kk(\xx)$ fixed by $G_P^{(i)}$. If $F_1$ is not rational, then
$|G_P^{(1)}| \leq g$.  Furthermore, if $F_1$ is rational and $G_P^{(1)}$ has a short orbit other than $\{P\}$, then
$$
|G_P^{(1)}| \leq \frac{p}{p-1}g.
$$
\end{prop}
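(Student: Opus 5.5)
Write $S=G_P^{(1)}$. The plan is to apply the Riemann--Hurwitz formula \eqref{rhg} to the action of $S$ on $\xx$. The quotient $\xx/S$ has function field $F_1$; let $\bar g$ be its genus. The key input is a lower bound on the different exponents coming from the ramification filtration. We may assume $S\neq 1$, since otherwise there is nothing to prove. Because $S$ is a $p$-group, its stabilizer $S_Q$ at any point $Q$ is also a $p$-group. Hence $S_Q^{(0)}=S_Q^{(1)}=S_Q$, and the definition of $d_Q$ gives
$$
d_Q\;\geq\;(|S_Q^{(0)}|-1)+(|S_Q^{(1)}|-1)\;=\;2(|S_Q|-1).
$$
In particular $d_P\geq 2(|S|-1)$, because $S_P=S$.

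\emph{Case $F_1$ not rational}, i.e.\ $\bar g\geq 1$. If $\bar g\geq 2$, then \eqref{rhg} gives $2g-2\geq 2|S|$, so $|S|\leq g-1$. If $\bar g=1$, the first term of \eqref{rhg} vanishes. Dropping every contribution except that of $P$ then gives $2g-2\geq d_P\geq 2|S|-2$, hence $|S|\leq g$.

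\emph{Case $F_1$ rational}, i.e.\ $\bar g=0$. Let $S(Q)$ be a short orbit different from $\{P\}$. It is disjoint from $\{P\}$, since $\{P\}$ is itself an orbit. This orbit has $|S|/|S_Q|$ points, and each has different exponent at least $2(|S_Q|-1)$. Since $S_Q$ is a nontrivial $p$-group, $|S_Q|\geq p$. So the whole orbit contributes at least
$$
2|S|\Big(1-\frac{1}{|S_Q|}\Big)\;\geq\;2|S|\,\frac{p-1}{p}.
$$
Keeping only the contributions of $P$ and of this orbit, \eqref{rhg} yields
$$
2g-2\;\geq\;-2|S|+2(|S|-1)+2|S|\frac{p-1}{p}\;=\;-2+2|S|\frac{p-1}{p},
$$
that is, $|S|\leq \frac{p}{p-1}g$.

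The only delicate point is the lower bound $d_Q\geq 2(|S_Q|-1)$ at wildly ramified points. This rests on the fact recalled in Section \ref{back} that for a $p$-group stabilizer the $0$-th and $1$-st ramification groups coincide. With that in hand, the rest is bookkeeping in \eqref{rhg}. All other terms in \eqref{rhg} are nonnegative, so discarding them is harmless.
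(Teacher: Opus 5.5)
Your proof is correct. You apply \eqref{rhg} to $S=G_P^{(1)}$ and use the estimate $d_Q\geq 2(|S_Q|-1)$, valid because every stabilizer in $S$ is a $p$-group, both at $P$ and along a second short orbit; this gives $|S|\leq g$ when $F_1$ is not rational and $|S|\leq \frac{p}{p-1}g$ in the rational case. The paper gives no proof of its own and simply cites \cite[Theorem 11.78]{HKT}, and your argument is the standard Riemann--Hurwitz count behind that result.
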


The following result is stated and proved  in \cite[Theorem 27]{DGT} for an algebraically closed field of characteristic $p>2$ to fit the context of the paper.  However, the proof works for curves over arbitrary algebraically closed fields.

\begin{thm}\cite[Theorem 27]{DGT}\label{dgt}
Let $\xx$ be a curve of genus $g\geq 2$ defined over an algebraically closed field $\kk$ of characteristic $p \geq 0$ admitting an automorphism of order $N \geq 2g+1$, where $p \nmid N$. Then, up to birational equivalence, either
$$
\xx: y^N=x^\alpha(x-1)^\beta
$$
where $\alpha$ and $\beta$ are positive integers such that $\alpha +\beta <N$ with $\gcd(\alpha,\beta,N)=1$, or
$$
\xx:y^2=(x^{N/2}-1)(x^{N/2}-\lambda)
$$
where $\lambda \in \kk \backslash \{0,1\}$ and $N \equiv 2 \mod 4$.
\end{thm}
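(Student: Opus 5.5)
The plan is to work directly with the cyclic group $G=\langle\sigma\rangle$ of order $N$. Because $p\nmid N$, the extension $\kk(\xx):\kk(\xx)^G$ is tame, and the Riemann--Hurwitz formula \eqref{rhg} becomes the equality case of \eqref{rhso}:
$$
2g-2=N(2\bar g-2)+\sum_{i=1}^{r}N\Big(1-\frac{1}{e_i}\Big),
$$
where $e_1,\dots,e_r\ge 2$ are the ramification indices over the $r$ branch points. Since $\kk$ is algebraically closed and $p\nmid N$, it contains a primitive $N$-th root of unity. Kummer theory then applies: $\kk(\xx)=\kk(\bar\xx)(y)$ with $y^N=f$ for some $f\in\kk(\bar\xx)$. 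This is what makes the argument characteristic-free, and it is the only point where the hypothesis $p\nmid N$ is used beyond tameness.

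\emph{Step 1: $\bar g=0$.} If $\bar g\ge 2$, then $2g-2\ge 2N$, which is absurd. If $\bar g=1$ and $r=0$, then $g=1$, contradicting $g\ge 2$. If $\bar g=1$ and $r\ge 1$, I will show that $r=1$ is impossible. The inertia generators at the branch points multiply to the identity in the abelian group $G$; in Kummer terms, the divisor of $f$ has degree $0$, so its coefficients cannot be nonzero mod $N$ at exactly one point. Hence $r\ge 2$, so $2g-2\ge N$, again contradicting $N\ge 2g+1$. So $\bar\xx\cong\p^1$ and
$$
\kk(\xx)=\kk(x,y),\qquad y^N=c\prod_j (x-a_j)^{m_j}.
$$
The ramification index over $a_j$ is $N/\gcd(m_j,N)$, and over $\infty$ it is $N/\gcd(\sum m_j,N)$. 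Irreducibility means $\gcd(N,m_j\,(\forall j))=1$.

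\emph{Step 2: bound $r$.} We have $2g-2=N(r-2)-\sum N/e_i$ with $N\ge 2g+1$, so $\sum_i 1/e_i\ge r-3+\frac{3}{N}>r-3$. Since each $e_i\ge 2$, this forces $r\le 4$ (for $r=5$ we would need $\sum 1/e_i>2$ with five terms each at most $\tfrac12$, which is possible only if all five equal $\tfrac12$, and then equality fails). Also $r\ge 3$, since otherwise $g=0$.

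\emph{Step 3: $r=3$.} Move the branch points to $0,1,\infty$ by a Möbius transformation. We get $y^N=x^\alpha(x-1)^\beta$ with $\gcd(\alpha,\beta,N)=1$. Replacing $y$ by $y\,x^{s}(x-1)^t$ reduces $\alpha,\beta$ modulo $N$ to $0<\alpha,\beta<N$; both are nonzero mod $N$ because $0$ and $1$ are branch points. If $\alpha+\beta>N$, apply $y\mapsto x(x-1)/y$, which replaces $(\alpha,\beta)$ by $(N-\alpha,N-\beta)$. We cannot have $\alpha+\beta=N$, since $\infty$ is a branch point. This gives the first family.

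\emph{Step 4: $r=4$, which I expect to be the main obstacle.} Here we need $\sum_{i=1}^4 1/e_i>1$ with the sharper inequality $\sum 1/e_i\ge 1+\frac{3}{N}$. This forces $(e_i)=(2,2,2,e)$ for some $e$. We also need the Kummer/product constraints: each $e_i=N/\gcd(m_i,N)$, the four exponents $m_i$ sum to $0$ mod $N$ (placing one branch point at $\infty$), and $\gcd(m_1,\dots,m_4,N)=1$. Three exponents with $e_i=2$ forces $N$ even and $m_i\equiv N/2$. The fourth exponent is then $m_4\equiv -3N/2\equiv N/2$, so $e=2$, unless $N/2$ is odd. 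Working this out carefully yields $N=2e$ with $e$ odd, i.e.\ $N\equiv 2\pmod 4$. The genus count then gives $N=2g+2$.

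Concretely, $\sigma^{N/2}$ is the hyperelliptic involution, and $\sigma$ induces an order-$N/2$ automorphism of $\p^1$ with two fixed points $0,\infty$. The $2g+2=N$ Weierstrass branch points form two regular orbits of size $N/2$, namely the roots of $x^{N/2}=1$ and of $x^{N/2}=\lambda$. This yields $y^2=(x^{N/2}-1)(x^{N/2}-\lambda)$ with $\lambda\ne 0,1$.

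The delicate part is the bookkeeping of the congruences on the $m_i$ together with the exact equality case of the genus formula, ensuring no other $4$-point configuration survives.
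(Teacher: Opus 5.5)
Steps 1 and 3 of your argument are fine. The paper itself gives no proof of this statement; it imports [DGT, Theorem 27] and only remarks that the argument is characteristic-free. The gap is in your signature analysis.

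\textbf{Step 4.} The inequality $\sum_{i=1}^4 1/e_i\ge 1+3/N$ does not force $(e_i)=(2,2,2,e)$. For instance, $(2,2,e,e)$ satisfies it whenever $e\le 2N/3$, and $(3,3,3,3)$ satisfies it once $N\ge 9$. Worse, $(2,2,2,e)$ never occurs. Three exponents $\equiv N/2$ give $m_4\equiv -3N/2\equiv N/2 \pmod N$ for every even $N$; there is no exception when $N/2$ is odd. So all $m_i\equiv N/2$, and $\gcd(m_1,\dots,m_4,N)=N/2$ then forces $N=2$. Also, Riemann--Hurwitz with signature $(2,2,2,N/2)$ gives $N=4g$, not $N=2g+2$. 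The second family actually has signature $(2,2,N/2,N/2)$. That is exactly what your last paragraph describes: two orbits of Weierstrass points with stabilizer of order $2$, and the two $G$-orbits of size $2$ over $x=0$ and $x=\infty$ with stabilizer of order $N/2$. So that paragraph does not follow from the computation before it.

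\textbf{Step 2.} It has the same defect. The condition $\sum_{i=1}^5 1/e_i>2$ with $e_i\ge 2$ does not force all $e_i=2$; for example $(2,2,2,2,3)$ gives $7/3$. Both $(2,2,2,2,e)$ with $e\le N/3$ and $(2,2,2,3,e)$ with $e\le 5$ pass $\sum 1/e_i\ge 2+3/N$.

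\textbf{What is missing.} You need to use the cyclic structure systematically. Put $d_i=\gcd(m_i,N)=N/e_i$. Then:
(a) $\sum_i m_i\equiv 0\pmod N$;
(b) by (a) and $\gcd(m_1,\dots,m_r,N)=1$, any $r-1$ of the $d_i$ have gcd $1$, i.e.\ the $\operatorname{lcm}$ of any $r-1$ of the $e_i$ equals $N$;
(c) $\sum_i d_i\ge N(r-3)+3$, which is just $N\ge 2g+1$ rewritten.

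For $r=5$: four elements of order $2$ in $\mathbb{Z}/N$ coincide, so (a) makes the fifth point unramified. For $(2,2,2,3,e)$ with $e\in\{3,4,5\}$, (c) requires $N\ge 18,36,90$ respectively, while $N=\operatorname{lcm}(e_i)=6,12,30$.

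For $r=4$: suppose two of the $e_i$ equal $2$. Then (a) gives $m_3\equiv -m_4$, so $e_3=e_4=e$. By (b), $\operatorname{lcm}(2,e)=N$, and by (c), $2N/e\ge 3$. Together these force $e=N/2$ odd, so $N\equiv 2\pmod 4$ and $g=N/2-1$. If at most one $e_i$ equals $2$, a finite check against (a)--(c) eliminates every case.

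Only after this does your last paragraph apply, and there you still need three justifications:
(i) $\sigma^{N/2}$ is the hyperelliptic involution, because its fixed points are exactly the $N=2g+2$ points with stabilizer of order $2$, so $\xx/\langle\sigma^{N/2}\rangle$ is rational;
(ii) $p\neq 2$ because $N$ is even, so a model $y^2=f(x)$ exists;
(iii) the induced automorphism of $\p^1$ of order $N/2$ is diagonalizable, with fixed points $0,\infty$, because $p\nmid N/2$.
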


\begin{thm}\cite[Theorem 1]{Ch}\label{chang}
Let $\cc$ and $\D$ two nonsingular plane curves over an algebraically closed field with projective degree $d \geq 4$. Then $\cc$ and $\D$ are birationally equivalent if and only if they are projectively equivalent.
\end{thm}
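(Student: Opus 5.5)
The plan is to reduce the statement to the uniqueness of the linear series $g^2_d$ on a nonsingular plane curve of degree $d\geq 4$. The ``if'' direction is trivial, since a projectivity restricts to an isomorphism of the curves. For the converse, suppose $\cc$ and $\D$ are birationally equivalent. Both are nonsingular and projective, so the birational map extends to an isomorphism $\varphi:\cc\to\D$. Let $H_\cc$ and $H_\D$ be the line sections. Then $\varphi^*|H_\D|$ is a base-point-free complete linear series of degree $d$ and dimension $2$ on $\cc$. If we show $\varphi^*H_\D\sim H_\cc$, then $\varphi$ is induced by the linear isomorphism $H^0(\D,\mathcal O(1))\to H^0(\cc,\mathcal O(1))$. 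These spaces equal $H^0(\p^2,\mathcal O(1))$ because plane curves are projectively normal. Hence $\varphi$ comes from an element of $\operatorname{PGL}(3,\kk)$.

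The main step is the following claim: if $L$ is a divisor on $\cc$ with $\deg L=d$ and $h^0(L)\geq 3$, then $L\sim H_\cc$. I would prove it with the canonical series.

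\begin{enumerate}
\item By adjunction $K_\cc\sim (d-3)H_\cc$, and $g=(d-1)(d-2)/2$. Since $\cc$ is not contained in a curve of degree $d-3$, plane curves of degree $d-3$ cut out the complete canonical series.
\item Take an effective $D\in|L|$. Riemann--Roch gives $h^0(K-D)=h^0(D)-d-1+g\geq g-(d-2)$. So the $d$ points of $D$, with multiplicity, impose at most $d-2$ conditions on plane curves of degree $d-3$.
\item Show that a length-$d$ subscheme of a nonsingular plane curve imposing at most $d-2$ conditions on curves of degree $d-3$ must lie on a line. Then $D\leq \ell\cdot\cc$ for a line $\ell$, and equality of degrees gives $D=\ell\cdot\cc\sim H_\cc$.
\end{enumerate}

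Step 3 is the main obstacle. My first approach is the standard lemma: a subscheme $Z\subset\p^2$ of length at most $2k+2$ that fails to impose independent conditions on curves of degree $k$ contains a subscheme of length $k+2$ on a line. This is proved by induction on $k$, removing a line through as many points of $Z$ as possible and applying the residual exact sequence. Here $k=d-3$ and $|Z|=d\leq 2k+2$ whenever $d\geq 4$. This gives a line $\ell$ containing at least $d-1$ points of $D$.

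Next, take $P\in D$ off $\ell$, if one exists. Since $D$ fails to impose independent conditions by two, $D-P$ still fails by at least one. Applying the lemma again, and comparing with the line already found, should force all of $D$ onto $\ell$. The small cases $d=4,5$, where the bound $d\leq 2k+2$ is tight, need a direct check.

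As a fallback I would use the alternative route through gonality. For a point $P$ of $D$, the series $|L-P|$ is a $g^1_{d-1}$. On a smooth plane curve every $g^1_{d-1}$ is cut out by lines through a point of the curve, a result of Namba that can be proved by the same Riemann--Roch argument with $d-1$ points. It then follows that $L\sim H_\cc$.
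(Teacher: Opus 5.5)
The paper does not prove this statement; it is quoted from Chang's paper, so there is no internal argument to compare with. Your overall plan is the classical one: show the $g^2_d$ is unique using the canonical series cut out by curves of degree $d-3$, then use linear normality of plane curves. Steps 1--2 and the reduction to $\varphi^*H_{\D}\sim H_{\cc}$ are fine. Step 3, however, has two defects that need repair.

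\textbf{The lemma is misstated.} As you state it, it is false at length exactly $2k+2$. Take $2k+2$ points on a smooth conic: no three are collinear. Yet they fail by one to impose independent conditions on curves of degree $k$, since by B\'ezout every such curve through them contains the conic; for $k=1$ this is just four general points. The correct statement requires length at most $2k+1$; at length $2k+2$ one must add the alternative ``$Z$ lies on a conic''. This does not hurt you. Indeed $d\le 2(d-3)+1$ exactly when $d\ge 5$, so the corrected lemma applies there. The case $d=4$ is immediate: $K\sim H_{\cc}$, and Riemann--Roch gives $h^0(K-D)\ge 1$ with $\deg(K-D)=0$, so $D\sim H_{\cc}$.

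\textbf{The last step of Step 3 fails as written.} Passing from ``$d-1$ points of $D$ on $\ell$'' to ``all of $D$ on $\ell$'' does not work the way you describe. If $P$ is the point of $D$ off $\ell$, then $D-P$ is precisely the part of $D$ on $\ell$. Applying the lemma to $D-P$ just returns $\ell$ and gives no contradiction. Removing a point on $\ell$ instead also fails, e.g.\ when $D=dP$ and $\ell$ meets $\cc$ at $P$ with multiplicity $d-1$.

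What closes the step is a direct count. Put $k=d-3$ and let $P=D-\min(D,\ell\cdot\cc)$ be the residual point. In the residual sequence
\[
0\to\mathcal I_P(k-1)\to\mathcal I_D(k)\to\mathcal I_{D\cap\ell,\ell}(k)\to 0,
\]
the right-hand term has no sections, because $D\cap\ell$ has length $k+2>k$. Hence $h^0(\mathcal I_D(k))=\binom{k+1}{2}-1$, so $D$ imposes $k+2=d-1$ conditions. This contradicts the bound $d-2$ from Step 2.

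\textbf{Your fallback route is already complete} (with the corrected lemma) and avoids the issue:
\begin{enumerate}
\item Any $E\in|L-P|$ has length $d-1\le 2k+1$ and fails by at least one to impose independent conditions.
\item The lemma therefore puts $E$ on a line, so $L-P\sim H_{\cc}-Q$ for some $Q\in\cc$.
\item If $Q\ne P$, Riemann--Roch gives $h^0(H_{\cc}+P-Q)=2$. This is because $|(d-4)H_{\cc}+Q|=Q+|(d-4)H_{\cc}|$ (as $h^0(H_{\cc}-Q)=2$), and $P$ is not a base point of it.
\item This contradicts $h^0(L)\ge 3$, so $Q=P$ and $L\sim H_{\cc}$.
\end{enumerate}
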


\section{Proof of Theorem \ref{main1}}\label{pm1}

In this section we provide a proof of Theorem \ref{main1}. From now on, $\xx$ stands for a curve of genus $g\geq 2$ defined over an algebraically closed field $\kk$ of characteristic $p \geq 0$. We are assuming that the full automorphism group $\aut(\xx)$ of $\xx$ admits a subgroup $G$ of order $\ell^2$, where $\ell$ is a prime integer. It is important to recall that in such case the group $G$ is abelian. We will study the tame case ($\ell \neq p$) and the wild case ($\ell =p$) separately. 

\subsection{Tame case}\label{tc}

For this subsection, suppose $\ell \neq p=\Char(\kk)$. Since $\ell$ is prime and $|G|=\ell^2$, the short orbits of $G$ on $\xx$ have either size $1$ or size $\ell$. Thus the Riemann-Hurwitz formula \eqref{rhso} in this case reads
\begin{equation}\label{rhtame}
2g-2  = \ell^2(2\bar{g}-2)+r(\ell^2-1)+s(\ell^2-\ell),
\end{equation}
where $\bar{g}$ denotes the genus of $\xx/G$, $r$ is the number of orbits of $G$ of size $1$ (i.e., the number of fixed points of $G$) and $s$ is the number of orbits of $G$ of size $\ell$. The proof of Theorem \ref{main1}(a) will be a direct consequence of the following case by case analysis:
\begin{itemize}
\item[(i)] \emph{Case $\bar{g} \geq 2$:} Since $r,s \geq 0$ by \eqref{rhtame} we have $2g-2 \geq 2\ell^2(\bar{g}-1)\geq 2\ell^2$, hence $\ell \leq \sqrt{g-1} < \sqrt{2g+1}$.
\item[(ii)] \emph{Case $\bar{g} =1$:} From \eqref{rhtame} we have that the case $r=s=0$ is impossible. So assume first that $r \geq 1$. From \eqref{rhtame} we immediately obtain $\ell \leq \sqrt{2g-1}<\sqrt{2g+1}$. Now let us suppose that $r=0$ and $s\geq 1$. If $s\geq 2$, we obtain from \eqref{rhtame} that $\ell^2-\ell-(g-1)\leq 0$. Taking into account that $\ell>1$, the solution of the last inequality is 
$$
\ell \leq \frac{1+\sqrt{4g-3}}{2}<\sqrt{2g+1}.
$$
Finally, suppose $r=0$ and $s=1$. We then obtain $\ell^2-\ell=2g-2$, and 
$$
\ell = \frac{1+\sqrt{8g-7}}{2}.
$$
 Note that if $\ell>2$, then $\ell^2=2g+(\ell-2)\geq 2g+1$. By \cite[Proposition 10]{DGT} if $G$ is cyclic, then the number of branch points of $\xx/G$ with respect to the cover $\xx \lra \xx/G$ is bigger than $1$, contradicting our assumption $r=0$ and $s=1$ . Hence, if $\ell>2$ then  $G$ is not cyclic. From $r=0$ and $s=1$ we conclude that $G$ has a single short orbit $\Omega$ of size $\ell$. Let $P\in \Omega$ and consider the stabilizer $G_P \leq G$ of $P$ in $G$. The order of $G_P$ is $\ell$ and since $G$ is abelian, $G_P$ is the stabilizer of every point $Q \in \Omega$. From the fact that $G \cong \zz_\ell \times \zz_\ell$, we have that there exists a subgroup $H \leq G$ of order $\ell$ other than $G_P$. The cover $\xx \rightarrow \xx/H$ is then unramified, and if $\tilde{g}$ denotes the genus of $\xx/H$, the Riemann-Hurwitz formula applied to such cover gives
 $$
 2g-2=\ell(2\tilde{g}-2)  \Longrightarrow \ell=2\tilde{g}-1.
 $$
 Hence $\xx/H$ is a $2\tilde{g}-1$-curve, where $2\tilde{g}-1$ is a prime. By \cite[Theorem 2]{Ho} we must have $\tilde{g}=2$, thus $\ell=3$. In this case,  $\ell^2-\ell=2g-2 \Longrightarrow g=4$. On the other hand, if $\ell=2$ then $\ell^2-\ell=2g-2 \Longrightarrow g=2$. Since
 $$
\frac{1+\sqrt{8g-7}}{2} \leq \sqrt{2g+1} \Longleftrightarrow g \leq 4,
$$
we obtain $\ell \leq  \sqrt{2g+1}$.
 
\item[(iii)] \emph{Case $\bar{g} =0$:} Again, the case $r=s=0$ clearly does not occur. We have then the following sub-cases, by the systematic use of \eqref{rhtame}.
\begin{itemize}
\item If $r \geq 3$, then \eqref{rhtame} gives $2g-2 \geq -2\ell^2+3\ell^2-3$, whence $\ell \leq \sqrt{2g+1}$. Furthermore, one should note that $r\geq 4 \Lra \ell \leq \sqrt{g+1}$ and $r=3$ and $s\geq 1$ implies $\ell \leq \frac{1+\sqrt{16g+9}}{4}<\sqrt{2g+1}$.
\item Assume $r=2$. Then $g \geq 2$ forces $s \geq 1$. In this case, we obtain $\ell^2-\ell-2g \leq 0$, and the solution for this inequality is 
\begin{equation}\label{L2}
\ell \leq \frac{1+\sqrt{8g+1}}{2}. 
\end{equation}
Note that if $s \geq 2$ then $\ell \leq \frac{1+\sqrt{4g+1}}{2}<\sqrt{2g+1}$. In particular, \eqref{L2} can be attained only if $r=2$ and $s=1$.
\item The case $r=1$ cannot occur. Indeed, since $r=1$ we conclude that $G$ fixes some point $P \in \xx$. Thus $G=G_P=G_P^{(0)}$ is cyclic (see e.g. \cite[Proposition 3.8.5]{St}). Let $H \leq G$ be the unique subgroup of $G$ of order $\ell$. By the uniqueness of $H$, every point on a short orbit of $G$ must be fixed by $H$. Then the cover $\xx/H \lra \xx/G$ has a unique (fully) ramified point, with the remaining points splitting completely, which is impossible by the Riemann-Hurwitz formula applied to this cover.
\item Suppose $r=0$. Then $2g-2=(s-2)\ell^2-s\ell$, which provides $s \geq 3$. If $s=3$, then we obtain
\begin{equation}\label{L1}
\ell = \frac{3+\sqrt{8g+1}}{2}. 
\end{equation}
Moreover, $s \geq 4$ implies $\ell \leq \sqrt{g}+1$. In turn, $\sqrt{g}+1 <\sqrt{2g+1}$ for $g \geq 5$ and  $\lfloor \sqrt{g}+1\rfloor =\lfloor\sqrt{2g+1}\rfloor$ for $g=2,3,4$. In particular, \eqref{L1} can only be attained if $r=0$ and $s=3$.
\end{itemize}
\end{itemize}
The proof of Theorem \ref{main1}(a) is now completed.

\subsection{Wild case}\label{wc}

In this subsection we assume that $|G|=p^2$, where $p=\Char(\kk)$. Since $2<\sqrt{g}+1$ for all $g \geq 2$, we may assume that $p>2$. Thus the formula \eqref{rhso} in this case reads
\begin{equation}\label{rhwild}
2g-2  \geq  p^2(2\bar{g}-2)+r(p^2-1)+s(p^2-p),
\end{equation}
where $\bar{g}$ denotes the genus of $\xx/G$, $r$ is the number of orbits of $G$ of size $1$ and $s$ is the number of orbits of $G$ of size $p$. Once again we will perform a case by case analysis.
\begin{itemize}
\item[(i)] \emph{Case $\bar{g} \geq 2$:} It follows immediately from \eqref{rhwild} that $p \leq \sqrt{g-1}<\sqrt{g}+1$.
\item[(ii)] \emph{Case $\bar{g} = 1$:}  From \eqref{rhg} the case $r=s=0$ can be ruled out. Suppose $r \geq 1$. Then $G$ fixes a point $P \in \xx$, that is, $G=G_P$. Since $|G|=p^2$, we have $G=G_P^{(0)}=G_P^{(1)}$. Since $\xx/G_P^{(1)}$ is not rational, from Proposition \ref{nak} we have $p^2=|G_P^{(1)}|\leq g$, that is, $\ell \leq \sqrt{g}<\sqrt{g}+1$.

Now if $r=0$, then $s\geq 1$. Then $G$ has at least one short orbit $\Omega$ of size $p$, and the stabilizers of such points in $G$ have order $p$. Keeping in mind that $G_P=G_P^{(0)}=G_P^{(1)}$ for all $P \in \xx$, we have from \eqref{rhg}
$$
2g-2=p^2(2\bar{g}-2)+\sum_{P \in \xx}\sum_{i = 0}^{\infty}(|G_P^{(i)}|- 1)\geq \sum_{P \notin \Omega}\sum_{i = 0}^{\infty}(|G_P^{(i)}|- 1)+2p(p-1).
$$
Thus $p^2-p-(g-1) \leq 0$, which gives
$$
p \leq \frac{1+\sqrt{4g-3}}{2}<\sqrt{g}+1.
$$
\item[(iii)] \emph{Case $\bar{g} = 0$:} Again, $r=s=0$ does not occur. 
\begin{itemize}
\item Suppose $r \geq 2$. Then $G$ fixes two points of $\xx$, say $P$ and $Q$. We have $G=G_P^{(0)}=G_P^{(1)}$, the quotient curve $\xx/G_P^{(1)}$ is rational and there is a short orbit of $G$ in $\xx$ other than $\{P\}$. From Proposition \ref{nak}, one has
$$
|G|=|G_P^{(1)}| \leq \frac{p}{p-1}g \Lra p^2-p-g \leq 0.
$$
Therefore, 
$$
p \leq \frac{1+\sqrt{4g+1}}{2}<\sqrt{g}+1.
$$
\item Assume $r=1$ and $s \geq 1$. In this case, we have again that $G=G_P^{(0)}=G_P^{(1)}$ for some $P \in \xx$, the quotient curve $\xx/G_P^{(1)}$ is rational and there is a short orbit of $G$ in $\xx$ other than $\{P\}$. The conclusion $p <\sqrt{g}+1$ follows as in the case $r\geq 2$.
\item Assume $r=1$ and $s=0$. Here, we have that $G=G_P^{(0)}=G_P^{(1)}$ for some $P \in \xx$. Moreover, one of the following holds: $|G_P^{(2)}|=1$, $|G_P^{(2)}|=p$ of $|G_P^{(2)}|=p^2$.  Let us first show that the sub-case $|G_P^{(2)}|=1$ does not occur. Indeed, in this case \eqref{rhg} provides
$$
2g-2=-2p^2+2(p^2-1) \Lra g=0,
$$
a contradiction.

Now suppose that $|G_P^{(2)}|=p$. By Proposition \ref{congp} we must have 
$$
|G_P^{(2)}|=|G_P^{(3)}|=\cdots =|G_P^{(p+1)}|=p.
$$
If $|G_P^{(p+2)}|=1$, \eqref{rhg} provides
$$
2g-2=-2p^2+\sum_{i=0}^{p+1}\left(|G_P^{(i)}|-1 \right)=-2p^2+2(p^2-1)+p(p-1),
$$
which gives 
$$
p=\frac{1+\sqrt{8g+1}}{2}.
$$
If $|G_P^{(p+2)}|\neq 1$, from Proposition \ref{congp} we have 
$$
|G_P^{(2)}|=|G_P^{(3)}|=\cdots =|G_P^{(2p+1)}|=p, 
$$
and \eqref{rhg} implies 
$$
p \leq \frac{1+\sqrt{4g+1}}{2}<\sqrt{g}+1.
$$
In turn, suppose that $|G_P^{(2)}|=p^2$. If $|G_P^{(3)}|=1$, we obtain
$$
2g-2=-2p^2+\sum_{i=0}^{2}\left(|G_P^{(i)}|-1 \right)=p^2-3,
$$
whence $p=\sqrt{2g+1}$. 
If $|G_P^{(3)}|=p$, we must have $|G_P^{(3)}|=\cdots =|G_P^{(p+2)}|=p$. Then
$$
2g-2 \geq -2p^2+3(p^2-1)+p(p-1),
$$
i.e., $2p^2-p-(2g+1)\leq 0$. Hence $p \leq \frac{1+\sqrt{16g+9}}{4}<\sqrt{g}+1$. 
If $|G_P^{(3)}|=p^2$, the arguing as above we obtain $p \leq \sqrt{g+1}<\sqrt{g}+1$.
\item Assume $r=0$ and $s\geq 3$. Then from \eqref{rhg}, again taking into account that $|G_P^{(0)}|=|G_P^{(1)}|=p$ for all point $P\in \xx$ with nontrivial stabilizer in $G$, we obtain
$$
2g-2 \geq -2p^2+2sp(p-1)\geq -2p^2+6p(p-1),
$$
and so $2p^2 -3p-(g-1)\leq 0$. This last inequality provides 
$$
p \leq \frac{3+\sqrt{8g+1}}{4}<\sqrt{g}+1.
$$
\item Assume $r=0$ and $s=2$. As a direct application of \eqref{rhg} as in the previous cases, we conclude that $p^2-2p-(g-1) \leq 0$, which gives 
$$
p \leq \sqrt{g}+1.
$$
It is straightforward to check that equality $p=\sqrt{g}+1$ holds if, and only if, $|G_P^{(2)}|=1$ for all $P\in \xx$ with nontrivial stabilizer in $G$.
\item The case $r=0$ and $s=1$ does not occur. Indeed, in this case $G$ has a single short orbit $\Omega=\{P_1, \ldots,P_p\}$ on $\xx$. Since $G$ is abelian, we have that $G_{P_1}=G_{P_j}$ for all $j \in \{1,\ldots,p\}$. Set $H:=G_{P_1}$. Then $H$ has $p$ fixed points on $\xx$ and the cover $\xx/H \lra \xx/G$ is unramified. But this is a contradiction, since $\xx/G$ is rational, and a rational function field does not admit an unramified field extension by the Riemann-Hurwitz formula.
\end{itemize}
\end{itemize}

The proof of Theorem \ref{main1} is now complete. 

\section{Proof of Theorem \ref{main2}}\label{pm2}

In this section we give a proof of Theorem \ref{main2} by showing each item as a proposition. It is important to recall that $\xx$ is a curve of genus $g\geq 2$ defined over an algebraically closed field $\kk$ of characteristic $p \geq 0$, and the full automorphism group $\aut(\xx)$ of $\xx$ admits a subgroup $G$ of order $\ell^2$, where $\ell$ is a prime integer. Moreover, $\bar{g}$ is the genus of $\xx/G$ and $r$ and $s$ are respectively the number of fixed points and the number of orbits of size $\ell$ of $G$.

\begin{prop}\label{a1}
Let $\xx$ be a curve of genus $g \geq 2$ defined over an algebraically closed field $\kk$ of characteristic $p \geq 0$.  Assume that there exists a subgroup $G\leq \aut(\xx)$ such that $|G|=\ell^2$, where $\ell\neq p$ is a prime number. Then $\ell  = \frac{3+\sqrt{8g+1}}{2}$ if and only if $\xx$ is birationally equivalent to the Fermat curve $$\ff: x^\ell+y^\ell=1.$$
\end{prop}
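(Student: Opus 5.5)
The "if" direction is a direct computation. The Fermat curve $\ff: x^\ell+y^\ell=1$ with $\ell\neq p$ is a nonsingular plane curve of degree $\ell$, so $g=(\ell-1)(\ell-2)/2$. It admits the group $G=\{(x,y)\mapsto(\zeta x,\eta y)\}\cong \zz_\ell\times\zz_\ell$ of order $\ell^2$, where $\zeta,\eta$ run over the $\ell$-th roots of unity. Solving $2g=\ell^2-3\ell+2$ for $\ell$ gives $\ell^2-3\ell+2-2g=0$, and the positive root is exactly $\ell=\frac{3+\sqrt{8g+1}}{2}$. For $\ell=2$ this gives $g=0$, so only $\ell\geq 3$ is relevant, and there $g\geq 1$; the hypothesis $g\geq 2$ forces $\ell\geq 4$, hence $\ell\geq 5$. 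Since genus is a birational invariant, the equality also holds for every curve birationally equivalent to $\ff$.

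For the "only if" direction I would rely on the case analysis in the proof of Theorem \ref{main1}(a). First I would check that the value $\frac{3+\sqrt{8g+1}}{2}$ occurs only in case (iii) with $\bar g=0$, $r=0$ and $s=3$. Every other branch yields a strictly smaller bound. The cases needing care are the small genera $g=2,3,4$, where $\sqrt g+1$ and $\sqrt{2g+1}$ were compared only through floors. In those cases $\frac{3+\sqrt{8g+1}}{2}$ is not an integer, or equals $3$ or $4$ for $g=1,3$; for $g=3$ it equals $4$, which is not prime. So the equality case forces $\bar g=0$, $r=0$, $s=3$.

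Thus $\xx\to\xx/G\cong\p^1$ is a $\zz_\ell\times\zz_\ell$-cover, necessarily non-cyclic because there is no fixed point, with exactly three branch points, each of ramification index $\ell$. By a Möbius transformation I would place the branch points at $0,1,\infty$, with $\kk(\xx)^G=\kk(t)$. Since $\ell\neq p$ and $\kk$ contains the $\ell$-th roots of unity, Kummer theory describes $\kk(\xx)$ as $\kk(t)(f_1^{1/\ell},f_2^{1/\ell})$. The relevant subgroup of $\kk(t)^*/\kk(t)^{*\ell}$ has order $\ell^2$, and because ramification occurs only over $0,1,\infty$, it is generated by classes of $t$ and $t-1$ modulo $\ell$-th powers. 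The group of such classes, $\langle t,t-1\rangle$ modulo $\ell$-th powers, is itself $\cong(\zz/\ell)^2$, so it must be all of it. Hence $\kk(\xx)=\kk(t^{1/\ell},(t-1)^{1/\ell})$. Setting $x=t^{1/\ell}$ and $y=\zeta(1-t)^{1/\ell}$ gives $x^\ell+y^\ell=1$, and since both fields have degree $\ell^2$ over $\kk(t)$, $\kk(\xx)=\kk(x,y)=\kk(\ff)$.

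The main obstacle is justifying the Kummer step: showing that the degree-$\ell^2$ elementary abelian extension unramified outside $\{0,1,\infty\}$ is unique. I would argue via valuations. If $f\in\kk(t)^*$ has $\kk(t)(f^{1/\ell})$ unramified at a place $Q$, then $\ell\mid v_Q(f)$. So, modulo $\ell$-th powers and constants (constants are $\ell$-th powers because $\kk$ is algebraically closed), $f=t^a(t-1)^b$. This shows every Kummer generator lies in $\langle t,t-1\rangle$, and a counting argument finishes the proof.
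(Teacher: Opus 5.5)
Your route is sound overall and differs from the paper's, but one step, the non-cyclicity of $G$, is justified by an invalid inference.

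\textbf{Comparison with the paper.} Both proofs first reduce to $\bar g=0$, $r=0$, $s=3$. From there the paper works geometrically. It uses Riemann--Hurwitz to show that the stabilizer $G_P$ of a point in one short orbit fixes exactly that orbit pointwise, so $\xx/G_P$ and $\xx/G_Q$ are rational and $G=G_P\times G_Q$, i.e.\ $\xx$ is a generalized Fermat curve. It then picks generators $x,y$ of these two rational quotients with $\kk(x^\ell)=\kk(y^\ell)=\kk(\xx)^G$, and the common pole of $x^\ell,y^\ell$ gives $y^\ell=ax^\ell+b$.

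You instead apply Kummer theory to the whole $(\zz/\ell)^2$-extension of $\kk(t)$. You then identify its Kummer group with $\langle t,t-1\rangle$ in $\kk(t)^*/\kk(t)^{*\ell}$ by a valuation argument and a count. That part is correct, and it is cleaner and more self-contained than the paper's. It uses only that the base is rational and the branch locus lies over three points, with no computation of stabilizers or intermediate genera. The paper's route has the advantage of exhibiting the two rational quotients, a structure it reuses in case (I) of Proposition~\ref{a3}.

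Your discussion of $g=2,3,4$ is harmless but unnecessary. $\frac{3+\sqrt{8g+1}}{2}$ strictly exceeds every other bound in the case analysis for all $g$; the floor comparisons in the paper concern $\sqrt g+1$ versus $\sqrt{2g+1}$.

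\textbf{The gap.} You write that $G$ is ``necessarily non-cyclic because there is no fixed point''. That does not follow: a cyclic group of order $\ell^2$ can act without fixed points, for instance freely. Yet non-cyclicity is exactly what you need to use exponent-$\ell$ Kummer theory, so it must be proved. Either of the following fixes it.
\begin{itemize}
\item The paper's argument: if $G$ were cyclic, its unique subgroup $H$ of order $\ell$ would be the stabilizer of all $3\ell$ points of the short orbits. Then $\ell^2-3\ell=\ell(2g'-2)+3\ell(\ell-1)$, which gives $g(\xx/H)=1-\ell<0$, a contradiction.
\item An argument inside your own framework: a cyclic extension $\kk(t)(f^{1/\ell^2})$ unramified outside $0,1,\infty$ has $f\equiv t^a(t-1)^b$ modulo $\ell^2$-th powers and constants. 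Since there is no fixed point, the ramification index $\ell^2/\gcd(\ell^2,v_Q(f))$ is at most $\ell$ at $0$ and $1$. This forces $\ell\mid a$ and $\ell\mid b$, so $f$ is an $\ell$-th power and the extension has degree at most $\ell$, a contradiction.
\end{itemize}
With either repair inserted, your proof is complete.
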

\begin{proof}
First, the genus of $\ff$ is $g=(\ell-1)(\ell-2)/2$, as $\ff$ is smooth. Thus $\ell  = \frac{3+\sqrt{8g+1}}{2}$. Conversely, suppose that $\ell  = \frac{3+\sqrt{8g+1}}{2}$. By the proof of Theorem \ref{main1}, in subsection \ref{tc}, this equality occurs only if $\bar{g}=0$, $r=0$ and $s=3$. Hence $G$ has three short orbits of size $\ell$ on $\xx$, namely $\Omega_1$, $\Omega_2$ and $\Omega_3$. We first note that $G$ is not cyclic. In fact, if $G$ is cyclic, then there is a unique subgroup $H \leq G$ of order $\ell$. Thus every point of $\Omega_1\cup\Omega_2\cup\Omega_3$ is fixed by $H$, and if $g^\prime$ is the genus of $\xx/H$, \eqref{rhtame} gives 
$$
\ell^2-3\ell=\ell(2g^\prime-2)+3\ell(\ell-1),
$$
which implies $g^\prime=-\ell+1$, an absurd. Therefore, $G \cong \zz_\ell \times \zz_\ell$. Recall that since $G$ is abelian, for each $i=1,2,3$, every point of $\Omega_i$ has the same stabilizer in $G$. Let $P\in \Omega_1$; then $|G_P|=\ell$ and $G_P$ fixes every point of $\Omega_1$. The same argument via Riemann-Hurwitz formula used above to rule out the possibility of $G$ being cyclic shows that $G_P$ does not fix other points, that is, $G_P$ acts transitively on both $\Omega_2$ and $\Omega_3$. If $g_1$ denotes the genus of $\xx/G_P$, by \eqref{rhtame} applied to the cover $\xx\lra \xx/G_P$ we have
$$
\ell^2-3\ell=\ell(2g_1-2)+\ell(\ell-1) \Lra g_1=0.
$$
Arguing exactly in the same way, if $Q \in \Omega_2$ we have that $G_Q$ fixes precisely $\ell$ points, and then the genus of $\xx/G_Q$ is $g_2=0$. Since $G_P \neq G_Q$, we conclude that $G=G_P \times G_Q$ and both $\xx/G_P$ and $\xx/G_Q$ are rational curves; in other words, $\xx$ is a generalized Fermat curve over $\kk$.

 Let $x,y \in \kk(\xx)$ such that $\kk(\xx/G_P)=\kk(x)$ and $\kk(\xx/G_Q)=\kk(y)$. Since $G_P \cap G_Q=\{1\}$, it follows from basic Galois Theory that $\kk(x,y)=\kk(\xx)^{G_P \cap G_Q=}=\kk(\xx)$. Moreover, since both $\kk(\xx/G_P)$ and $\kk(\xx/G_Q)$ are Kummer extensions of degree $\ell$ of $\kk(\xx/G)$, $x$ and $y$ can be chosen such that $\kk(\xx/G)=\kk(x^\ell)=\kk(y^\ell)$. The extension $\kk(y):\kk(y^\ell)$ (resp. $\kk(x):\kk(x^\ell)$ ) has only two ramified points: the zero and the pole of $y^\ell$ (resp. $x^\ell$). Since each short orbit of $G$ lie over only one of this points, we may assume (without loss of generality) that $x^\ell$ and $y^\ell$ have a common pole and distinct zeros. Hence $y^\ell=a x^\ell+b$, with $a,b \in \kk$. A formal replacement of $b^{-1/\ell}y$ and $(-ab^{-1})^{1/\ell}x$ with $y$ and $x$ respectively finishes the proof. 
\end{proof}

\begin{rem}
The argument used in the last paragraph of the proof of Proposition \ref{a1} is adapted to our setting from that used in the proof of \cite[Theorem 5.1(a)]{AS2}. We included the details here for sake of completeness.
\end{rem}

We now deal with curves with $\ell\neq p$ attaining the second biggest possible value. 

\begin{prop}\label{a2}
Let $\xx$ be a curve of genus $g \geq 2$ defined over an algebraically closed field $\kk$ of characteristic $p \geq 0$.  Assume that there exists a subgroup $G\leq \aut(\xx)$ such that $|G|=\ell^2$, where $\ell\neq p$ is a prime number. Then $\ell  = \frac{1+\sqrt{8g+1}}{2}$ if and only if $\xx$ is birationally equivalent to a curve of the family  
$$
\mathcal{K}^{1}_{\alpha,\beta}: y^{\ell^2}=x^\alpha(x-1)^{\ell \beta},
$$
where $\alpha$ and $\beta$ are positive integers such that $\alpha +\beta \ell <\ell^2$ with $\ell \nmid \alpha$.
\end{prop}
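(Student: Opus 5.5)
We treat the two implications separately. Throughout, the case analysis in Subsection~\ref{tc} is used to pin down the ramification data.

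\emph{Forward direction.} Assume $\ell=\frac{1+\sqrt{8g+1}}{2}$, i.e.\ $2g=\ell^2-\ell$. By the proof of Theorem~\ref{main1}(a), equality in \eqref{L2} forces $\bar g=0$, $r=2$ and $s=1$.
\begin{enumerate}
\item First we show that $G$ is cyclic. Let $P$ be one of the two fixed points of $G$. Then $G=G_P=G_P^{(0)}$ is cyclic, since the stabilizer is cyclic in the tame case (\cite[Proposition 3.8.5]{St}). So $G\cong\zz_{\ell^2}$ is generated by an automorphism $\sigma$ of order $N=\ell^2$.
\item Next we check the hypothesis of Theorem~\ref{dgt}. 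We have $N=\ell^2=2g+\ell\geq 2g+1$ and $p\nmid N$. Hence $\xx$ is birationally $y^{\ell^2}=x^{\alpha}(x-1)^{\beta'}$ with $\alpha+\beta'<\ell^2$ and $\gcd(\alpha,\beta',\ell^2)=1$.
\item The second family in Theorem~\ref{dgt} cannot occur, because $N=\ell^2$ is odd or equal to $4$, so $N\not\equiv 2 \pmod 4$.
\item It remains to identify the exponents. In the model $y^{\ell^2}=x^\alpha(x-1)^{\beta'}$, the cover $\xx\to\xx/G=\kk(x)$ is a Kummer cover branched only over $x=0,1,\infty$. The ramification index over $x=0$ is $\ell^2/\gcd(\alpha,\ell^2)$, over $x=1$ it is $\ell^2/\gcd(\beta',\ell^2)$, and over $\infty$ it is $\ell^2/\gcd(\alpha+\beta',\ell^2)$.
\item Our data say there are exactly two totally ramified branch points (index $\ell^2$) and one branch point of index $\ell$. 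After a M\"obius change of $x$ permuting $0,1,\infty$, we may assume that the index-$\ell$ point is $x=1$. Then $\ell\| \beta'$, i.e.\ $\beta'=\ell\beta$ with $\ell\nmid\beta$, while $\ell\nmid\alpha$ and $\ell\nmid\alpha+\ell\beta$.
\item The last condition is automatic from $\ell\nmid\alpha$. This gives exactly $\mathcal K^1_{\alpha,\beta}$.
\item We must also check that the M\"obius renormalization keeps the shape $y^{N}=x^{a}(x-1)^{b}$ with $a+b<N$. This follows by reducing exponents mod $N$ and rescaling $y$ by a suitable power of $x$ or $x-1$, the standard normalization used in \cite{DGT}.
\end{enumerate}

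\emph{Converse.} Consider $\mathcal K^1_{\alpha,\beta}$ with $\ell\nmid\alpha$ and $\alpha+\beta\ell<\ell^2$.
\begin{enumerate}
\item Since $\gcd(\alpha,\ell^2)=1$, the polynomial is not a $d$-th power for any $d\mid\ell^2$, $d>1$. Hence the curve is irreducible and $\kk(x,y)/\kk(x)$ is a cyclic Kummer extension of degree $\ell^2$. Its Galois group $G=\langle y\mapsto\zeta y\rangle$ has order $\ell^2$.
\item Over $x=0$ the ramification index is $\ell^2$. Over $x=\infty$ the exponent is $-(\alpha+\beta\ell)$, which is prime to $\ell$, so the index is again $\ell^2$. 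Over $x=1$, with $\gcd(\ell\beta,\ell^2)=\ell\gcd(\beta,\ell)$, the index is $\ell$ provided $\ell\nmid\beta$.
\item This is where the extra condition $\ell\nmid\beta$ matters. It is implicit in the statement: $\beta<\ell$ follows from $\alpha+\beta\ell<\ell^2$ with $\alpha\geq1$, so $\ell\nmid\beta$ holds automatically.
\item Riemann--Hurwitz \eqref{rhtame} with $\bar g=0$, $r=2$, $s=1$ then gives $2g-2=-2\ell^2+2(\ell^2-1)+\ell^2-\ell$. Thus $2g=\ell^2-\ell$, i.e.\ $\ell=\frac{1+\sqrt{8g+1}}{2}$.
\end{enumerate}

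The main obstacle is the bookkeeping in the forward direction: identifying which branch point carries index $\ell$, and renormalizing the exponents via a M\"obius transformation so that this point sits at $x=1$ with the constraint $\alpha+\beta\ell<\ell^2$ preserved. The first thing to try is a direct case check, permuting $0,1,\infty$ and replacing exponents $(a,b,c)$ with $a+b+c\equiv0\pmod{\ell^2}$ by suitable representatives.
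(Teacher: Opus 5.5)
Your proposal is correct and follows essentially the same route as the paper. In the forward direction, the case analysis forces $\bar g=0$, $r=2$, $s=1$; a fixed point makes $G$ cyclic; and Theorem~\ref{dgt} applies because $\ell^2=2g+\ell\geq 2g+1$ and $\ell^2\not\equiv 2 \pmod 4$. The Kummer ramification indices, together with a permutation of $0,1,\infty$, then place the $\ell$-divisible exponent on $x-1$. Your normalization step works because such a permutation only permutes the three exponents $\alpha,\gamma,\ell^2-\alpha-\gamma$, which sum to $\ell^2$, so the condition on their sum is preserved. The converse is the Kummer/Riemann--Hurwitz genus computation, as in the paper. Your observation that $\ell\nmid\beta$ follows automatically from $\alpha+\beta\ell<\ell^2$ makes explicit a point the paper leaves implicit.
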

\begin{proof}
By Kummer theory, see \cite[Proposition 3.7.3]{St}, the genus of $\mathcal{K}^{1}_{\alpha,\beta}$ is $g=\frac{\ell(\ell-1)}{2}$, whence $\ell  = \frac{1+\sqrt{8g+1}}{2}$. Conversely, assume that $\ell  = \frac{1+\sqrt{8g+1}}{2}$. From the proof of Theorem \ref{main1}, this equality holds only if $\bar{g}=0$, $r=2$ and $s=1$. Since $r>0$, we have that $G$ fixes a point; in particular, $G$ must be cyclic (see e.g.\cite[Proposition 3.8.5]{St}). Now $\ell^2>\ell^2-\ell+1=2g+1$. Further, note that $\ell^2 \not\equiv 2 \mod 4$. Thus by Theorem \ref{dgt}, $\xx$ is birationally equivalent to the curve
$$
\cc: y^{\ell^2}=x^\alpha(x-1)^\gamma
$$
where $\alpha$ and $\gamma$ are positive integers such that $\alpha +\gamma <\ell^2$ with $\gcd(\alpha,\gamma,\ell^2)=1$. The three short orbits of $G$ are over the points $P_0, P_1,P_\infty \in \p^1\cong \xx/G$, where $P_a=(a:1)$ and $P_\infty=(1:0)$, $a\in \kk$. Again using Proposition \cite[Proposition 3.7.3]{St}, we can infer that two of the values of the set  $\{\alpha, \gamma, \alpha+\gamma\}$ are co-prime with $\ell$ and one is divisible by $\ell$. If $\ell \mid \gamma$, then $\gamma=\ell \beta$ for some $\beta$, and the result follows. If $\ell \mid \alpha$ then the change of coordinates $(x,y)\mapsto (-x+1,y)$ gives the desired expression. If  $\ell \mid \alpha+\gamma$, then the result follows using the change of coordinates $(x,y)\mapsto (x^{-1},(-1)^{\beta/\ell^2}x^{-1}y)$. 
\end{proof}

We end the tame cases of Theorem \ref{main2} with the next result.

\begin{prop}\label{a3}
Let $\xx$ be a curve of genus $g \geq 2$ defined over an algebraically closed field $\kk$ of characteristic $p \geq 0$.  Assume that there exists a subgroup $G\leq \aut(\xx)$ such that $|G|=\ell^2$, where $\ell\neq p$ is a prime number. Then $\ell=\sqrt{2g+1}$ if, and only if, $\xx$ is either birationally equivalent to a curve of the family 
$$
\mathcal{K}^{2}_{\alpha,\beta}: y^{\ell^2}=x^\alpha(x-1)^{\beta},
$$
where $\alpha$ and $\beta$ are positive integers such that $\alpha +\beta <\ell^2$ with $\alpha$, $\beta$ and $\alpha+\beta$ co-prime with $\ell$, or birationally equivalent to a curve of the family
$$
\mathcal{G}_a:ax^3y^3+x^3+y^3=1,
$$
  $a \in \kk \backslash \{0\}$ or birationally equivalent to a curve of the family
$$
\mathcal{N}_a:(y^3-x^3)^2+(a-1)(x^3-ay^3)=0 
$$
with $a \in \kk \backslash \{0,1\}$.
\end{prop}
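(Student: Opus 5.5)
The proof splits into the two directions. For the converse ("if") direction I will compute genera. For $\mathcal{K}^2_{\alpha,\beta}$, \cite[Proposition 3.7.3]{St} shows that the cyclic cover of degree $\ell^2$ is totally ramified over $P_0,P_1,P_\infty$, because $\alpha,\beta,\alpha+\beta$ are prime to $\ell$. Hence $2g-2=-2\ell^2+3(\ell^2-1)$, so $\ell^2=2g+1$. For $\mathcal{G}_a$ and $\mathcal{N}_a$ I will exhibit the group $\zz_3\times\zz_3$ acting by $(x,y)\mapsto(\omega x,\omega^j y)$, with $\omega$ a primitive cube root of unity and $p\neq 3$. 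I will then check that the genus is $4$, for instance by viewing each curve as a $\zz_3$-cover of a genus $2$ quotient, or by Riemann--Hurwitz over $\xx/G$ once the short orbits are listed. This gives $\ell=3=\sqrt{2g+1}$.

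For the "only if" direction I will use the case analysis in subsection \ref{tc} to find every configuration with $\ell=\sqrt{2g+1}$. The cases $\bar g\ge 2$ are excluded, and so are $r\ge4$, $r=3$ with $s\ge1$, $r=2$ with $s\ge2$ (by the stated strict inequalities), and $r=1$. Two sources remain.
\begin{itemize}
\item[(I)] $\bar g=0$, $r=3$, $s=0$. In this case $G$ fixes a point, so $G$ is cyclic of order $\ell^2\ge 2g+1$. Since $\ell^2$ is odd or equal to $4$, it is never $\equiv 2 \bmod 4$, so Theorem \ref{dgt} gives $y^{\ell^2}=x^\alpha(x-1)^\beta$. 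Three totally ramified points force $\alpha,\beta,\alpha+\beta$ to be prime to $\ell$, again by the Kummer genus formula.
\item[(II)] Equality cases with small genus. These are: $\bar g=1$, $r=0$, $s=1$ with $(\ell,g)=(3,4)$ (the case $\ell=2,g=2$ gives $\sqrt5\ne2$); the case $r=0$, $s\ge4$, whose bound $\sqrt g+1$ can hit $\sqrt{2g+1}$ only at $g=4,\ell=3$, since $\ell$ is an integer; and possibly $r=2$, $s=1$ or similar borderline coincidences. In every case I expect $\ell=3$ and $g=4$ with $G\cong\zz_3\times\zz_3$. Checking the numerics: $r=0,s=4$ gives $6=-18+4\cdot 6$, which holds. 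For $\bar g=1,s=1$: $6=0+6$, which also holds.
\end{itemize}

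Case (II) is the main obstacle. I must show that a genus $4$ curve with $\zz_3\times\zz_3$ acting, in one of these two configurations, is birational to $\mathcal{G}_a$ or $\mathcal{N}_a$. For $s=4$, $\bar g=0$, I will mimic Proposition \ref{a1}. Each of the four order-$3$ subgroups $H_i$ fixes exactly one short orbit, of size $3$. Riemann--Hurwitz then gives $6=3(2g_i-2)+6$, so $g_i=1$. So the quotients are elliptic, not rational, and the Fermat trick must be adapted. I plan to pick two subgroups and write $\kk(\xx)$ as a compositum of two cyclic cubic Kummer extensions of $\kk(\xx/G)=\kk(t)$, each branched at two of the four branch points. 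After a Möbius normalisation of the branch points to $0,\infty,1,\lambda$, this gives $x^3=t$ and $y^3=(t-1)/(t-\lambda)$, or a similar form with the exponents split according to which subgroup stabilises which orbit. Eliminating $t$ should produce the plane models $\mathcal{G}_a$ or $\mathcal{N}_a$. Which of the two models arises should depend on how the exponents distribute, and $a$ is determined by $\lambda$.

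For $\bar g=1$, $s=1$, the subgroup $H\neq G_P$ gives an unramified cover of a genus $2$ curve, as in the proof of Theorem \ref{main1}. There I would try to show that this configuration either reduces to one of the same two families, by a change of variables on the models found above, or is impossible. To decide, I will first verify directly which configuration each of $\mathcal{G}_a$ and $\mathcal{N}_a$ realises, by computing their short orbits under $\zz_3\times\zz_3$. That computation tells me which configuration should correspond to which family, and I will match the normal forms accordingly.
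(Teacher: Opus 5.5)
\textbf{There is a genuine gap.} Your enumeration and your treatment of $\bar g=0$, $r=3$, $s=0$ agree with the paper. The gap is in the case $\bar g=0$, $r=0$, $s=4$, $\ell=3$, which is the heart of the statement.

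\emph{The stabiliser pattern you assume is wrong.} You claim each of the four subgroups of order $3$ fixes exactly one short orbit. In fact that pattern never occurs. By Kummer theory (the divisors of the Kummer generators over $\kk(\xx/G)$ have degree $0$), suitable generators $\sigma_1,\dots,\sigma_4$ of the four inertia groups satisfy $\sigma_1\sigma_2\sigma_3\sigma_4=1$. One checks directly that no four vectors of $\F_3^2$ spanning the four distinct lines sum to zero. Since Riemann--Hurwitz allows at most two orbits to share a stabiliser, the patterns that actually occur are:
(i) two subgroups $H,K$ each fixing two short orbits, so $\xx/H$ and $\xx/K$ are rational;
(ii) one subgroup fixing two orbits while two others $H,K$ fix one orbit each, so $\xx/H$ and $\xx/K$ are elliptic, and the fourth subgroup acts freely.

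\emph{Your recipe covers only pattern (i).} Two cubic Kummer extensions each branched at two points, $x^3=t$ and $y^3=(t-1)/(t-\lambda)$, is pattern (i), which contradicts your own claim that the quotients are elliptic. It does give $\mathcal{G}_a$ with $a=-1/\lambda$ after rescaling $y$. In pattern (ii), however, only one of the four intermediate cubic extensions is branched at just two points, so the recipe cannot be applied. This is exactly where $\mathcal{N}_a$ comes from, and your remark about splitting the exponents differently does not produce it.

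\emph{The missing step for $\mathcal{N}_a$} is the paper's argument.
\begin{itemize}
\item[1.] $\kk(\xx/H)$ and $\kk(\xx/K)$ are cubic Kummer extensions of $\kk(u)=\kk(\xx/G)$, each branched at three points, two of which are common.
\item[2.] Normalise the common points to $0,\infty$ and the others to $1$ and $a$. This gives $y^3=u(u-1)$ and $x^3=u(u-a)$.
\item[3.] Hence $\kk(\xx)=\kk(x,y)$ and $u=(y^3-x^3)/(a-1)$. Substituting yields the equation of $\mathcal{N}_a$.
\item[4.] You still need irreducibility: a genus-$4$ plane model has degree at least $5$, and the sextic has no linear factor.
\end{itemize}

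\emph{The case $\bar g=1$, $r=0$, $s=1$ is left undecided.} Your proposed test cannot settle it: both $\mathcal{G}_a$ and $\mathcal{N}_a$ realise $\bar g=0$ with four short orbits, so checking them says nothing about whether a configuration with elliptic $\xx/G$ exists. The missing idea is that this case is impossible.
\begin{itemize}
\item[1.] Take $H\le G$ of order $3$ different from the common stabiliser of the short orbit. Then $\xx\to\xx/H$ is unramified.
\item[2.] Hence the tame cyclic cubic cover $\xx/H\to\xx/G$ has exactly one branch point.
\item[3.] Write it as $z^3=f$. Its branch points are the $Q$ with $3\nmid v_Q(f)$. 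Since $\deg\operatorname{div}(f)=0$, there cannot be exactly one such point. This is \cite[Proposition 10]{DGT}, as used in the paper.
\end{itemize}

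\emph{Tighten the enumeration.} Instead of hedging about ``$r=2$, $s=1$ or similar coincidences'', substitute $\ell^2=2g+1$ into \eqref{rhtame}. This forces $(\bar g,r,s)\in\{(0,3,0),(1,0,1),(0,0,4)\}$, with $\ell=3$ in the last two.
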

\begin{proof}
It is not difficult to verify that the three curves above have genus equal to $(\ell^2-1)/2$. We now deal with the converse; that is, assume that $\aut(\xx)$ has a subgroup $G$ of order  $\ell^2=2g+1$.
According to the proof of Theorem \ref{main1}, the equality $\ell=\sqrt{2g+1}$ is possible only in three situations: 
\begin{itemize}
\item[(1)] $\bar{g}=1$, $r=0$, $s=1$, $\ell=3$ and $g=4$.
\item[(2)] $\bar{g}=0$, $r=3$, $s=0$.
\item[(3)] $\bar{g}=0$, $r=0$, $s=4$, $\ell=3$ and $g=4$.
\end{itemize}
First we show that case (1) actually cannot occur. In such a case we have that $G$ is not cyclic. As we saw in the proof of Theorem \ref{main1}, there exists $H \leq G$ of order $\ell$ such that $\xx/H$ has genus $\tilde{g}=2$ and $\xx \lra \xx/H$ is unramified. Since the cover $\xx \lra \xx/G$ has only one branch point, we conclude that the cover $\xx/H \lra \xx/G$ has a unique branch point as well. However, the cover $\xx/H \lra \xx/G$ is cyclic and tame, and then it corresponds to a Kummer cover with only one branch point, which is impossible according to \cite[Proposition 10]{DGT}.

Now suppose that (2) holds, i.e., $\bar{g}=0$, $r=3$, $s=0$. Since $r>1$, we have that $G$ is cyclic. Further, $|G|=\ell^2=2g+1$. Thus by Theorem \ref{dgt}, $\xx$ is birationally equivalent to the curve
$$
\cc: y^{\ell^2}=x^\alpha(x-1)^\beta
$$
where $\alpha$ and $\beta$ are positive integers such that $\alpha +\beta <\ell^2$ with $\gcd(\alpha,\beta,\ell^2)=1$. Here the points $(0:1),(1:1),(1:0) \in \p^1\cong \xx/G$, must be fully ramified. By Proposition \cite[Proposition 3.7.3]{St} we conclude that the values of the set  $\{\alpha, \gamma, \alpha+\gamma\}$ are all co-prime with $\ell$. 

Finally assume (3), that is, $\bar{g}=0$, $r=0$, $s=4$, $\ell=3$ and $g=4$. First, $G$ is not cyclic. In fact, if $G$ is cyclic then there is only one subgroup $H \leq G$ of order $3$, and such subgroup must fix every point on a short orbit of $G$. Using this information in the Riemann-Hurwitz formula \eqref{rhtame} leads us to a contradiction. Denote by $\Omega_i$ the short orbits of $G$ on $\xx$ for $i=1,2,3,4$. The group $G$ is abelian, so for a fixed $i$, every point on $\Omega_i$ share the same stabilizer in $G$. Again from \eqref{rhtame}, the points on at most two such short orbits of $G$ can share the same stabilizer in $G$. We then have two possible scenarios, without loss of generality: 
\begin{itemize}
\item[(I)] There are $H,K \leq G$ with $H\neq K$ such that the fixed points of $H$ are precisely  $\Omega_1 \cup \Omega_2$ and the fixed points of $K$ are precisely  $\Omega_3 \cup \Omega_4$. 
\item[(II)] There are $H,K \leq G$ with $H\neq K$ such that the fixed points of $H$ are precisely  $\Omega_1$ and the fixed points of $K$ are precisely  $\Omega_2$.
\end{itemize}
In case (I), the Riemann-Hurwitz formula provides that both $\xx/H$ and $\xx/K$ are rational, that is, $\xx$ is a generalized Fermat curve. An argument analogous to the one given in the proof of Proposition \ref{a1} (see also \cite[Theorem 5.1]{AS2}) implies that $\xx$ is birationally equivalent to $ax^3y^3+x^3+y^3=1$, with $a \in \kk^*$.

In case (II) the Riemann-Hurwitz formula provides that both $\xx/H$ and $\xx/K$ are elliptic curves, that is, both have genus $1$. Let $u \in \kk(\xx)$ such that $\xx/G=\kk(u)$. Both covers $\xx/H\lra \xx/G$ and $\xx/K\lra \xx/G$ are fully ramified at three points. Since $\xx \lra \xx/G$ has four branch points, we have that the covers $\xx/H\lra \xx/G$ and $\xx/K\lra \xx/G$  must share two branch points. Since $\aut(\p^1)\cong \operatorname{PGL}(2,\kk)$  is sharply $3$-transitive on the points of $\p^1$, we may assume that the cover $\xx/H\lra \xx/G$ ramifies at $P_0,P_1$ and $P_\infty\in \p^1$, and $\xx/K\lra \xx/G$  ramifies at $P_0,P_a$ and $P_\infty$ for some $a\in \kk\backslash \{0,1\}$. The extension $\kk(\xx/H):\kk(\xx/G)$ is a Kummer extension of degree $3$, and since it ramifies precisely at $P_0,P_1,P_\infty\in \p^1$, one can see from \cite[Proposition 3.7.3]{St} that there exists $y \in \kk(\xx/H)$ such that $\kk(\xx/H)=\kk(u,y)$ and
$
y^3=u^r(u-1)^s
$
for some nonzero integers $r,s$ such that $r,s$ and $r+s$ are co-prime with $3$. Up to a replacement of $y$ with $zy$ for a suitable $z \in \kk(u)$, we may assume that $r$ and $s$ are positive and $r+s<3$, which means that $r=s=1$. In other words, $\kk(\xx/H)=\kk(u,y)$ with
$$
y^3=u(u-1).
$$
 Using an analogous argument and taking into account that $\kk(\xx/K):\kk(\xx/G)$ is a Kummer extension of degree $3$ ramifying precisely at $P_0,P_a,P_\infty\in \p^1$, $a\in \kk\backslash \{0,1\}$, we conclude that there exists $x \in \kk(\xx)$ such that $\kk(\xx/K)=\kk(u,x)$ with
$$
x^3=u(u-a).
$$ 
Note that since $x \notin \kk(\xx/H)$ (otherwise we would have $\kk(\xx/K) =\kk(\xx/H)$) and $[\kk(\xx):\kk(\xx/H)]=3$, we conclude that $\kk(\xx)=\kk(x,y)$. Furthermore, 
$$
u^2=y^3+u=x^3+au \Lra u=\frac{y^3-x^3}{a-1}.
$$
Hence, replacing the last equality in $y^3=u^2-u$, we obtain
\begin{equation}\label{extra}
(y^3-x^3)^2+(a-1)(x^3-ay^3)=0.
\end{equation}
It remains to show that \eqref{extra} defines an irreducible curve. Since $\kk(\xx)=\kk(x,y)$, a plane model for $\xx$ is defined by an irreducible factor of the left side of \eqref{extra}. Since $\xx$ has genus $g=4$, such a factor must have degree at least $d \geq 5$, as $g\leq (d-1)(d-2)/2$. However, if $(y^3-x^3)^2+(a-1)(x^3-ay^3)$ has a degree $5$ factor, then it has a linear factor, and a straightforward computation shows that this does not occur. Therefore $d=6$, and this finishes the proof.
\end{proof}

In the sequel of this section we turn our attention to the wild case, that is, $\ell=p=\Char(\kk)$. Since we are assuming that $g\geq2$ and $2<\sqrt{2}+1\leq \sqrt{g}+1$, in the rest of this section we may assume $p>2$.

\begin{prop}\label{b1}
Let $\xx$ be a curve of genus $g \geq 2$ defined over an algebraically closed field $\kk$ of characteristic $p \geq 0$.  Assume that there exists a subgroup $G\leq \aut(\xx)$ such that $|G|=p^2$. Then  $p=\frac{1+\sqrt{8g+1}}{2}$ if and only if $\xx$ is birationally equivalent to a curve of the family
$$
\yy_{a}:y^p-y=x^{p+1}+ax^2, 
$$
with $a \in \kk$. 
\end{prop}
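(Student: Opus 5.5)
For the direct implication I will compute the genus of $\yy_a$ and check that $\aut(\yy_a)$ contains a subgroup of order $p^2$.

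\emph{Genus.} The curve $\yy_a$ is an Artin--Schreier cover of $\kk(x)$ totally ramified only at $x=\infty$. Since $p>2$, the pole order of $x^{p+1}+ax^2$ at infinity is $p+1$, which is coprime to $p$. Hence by \cite[Proposition 3.7.8]{St} the genus is $g=(p-1)p/2$. This is equivalent to $p=\frac{1+\sqrt{8g+1}}{2}$.

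\emph{The group.} For $c\in\kk$ with $c^p+c=0$ (there are $p$ such $c$), consider $\sigma_{c}:(x,y)\mapsto (x+c,\,y+\phi_c(x))$. One computes
$$
(x+c)^{p+1}+a(x+c)^2-x^{p+1}-ax^2=c x^p+c^px+c^{p+1}+2acx+ac^2 .
$$
We want this to equal $\phi_c^p-\phi_c$ for some polynomial $\phi_c$. The term $cx^p$ is handled by $\phi_c\ni c'x$ with $c'^p=c$, which produces $cx^p - c'x$. Combining the linear terms then requires $c^p+2ac = -c'$, up to terms absorbed by $\phi_c^p-\phi_c$. The exact solvability condition is a polynomial equation in $c$ whose solutions form an additive subgroup of order $p$. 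Together with the translations $y\mapsto y+b$, $b\in\F_p$, these maps generate a group of order $p^2$ fixing the point at infinity $P_\infty$. Alternatively, I could just exhibit $G=\{(x,y)\mapsto(x+c,y+\phi_c(x))\}$, which is abelian of order $p^2$ modulo the center. It suffices to find a subgroup of order $p^2$ of this $p$-group, for instance the center times one translation in $x$.

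\emph{Converse.} Assume $p=\frac{1+\sqrt{8g+1}}{2}$. By the proof of Theorem \ref{main1}(b), we have $\bar g=0$, $r=1$, $s=0$, and $|G_P^{(2)}|=p$ with $G_P^{(p+2)}$ trivial. Here $P$ is the unique ramified point, and $G=G_P^{(0)}=G_P^{(1)}$.

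Let $H=G_P^{(2)}$, of order $p$, and consider the tower $\xx\to\xx/H\to\xx/G\cong\p^1$. By Hilbert's upper numbering (Herbrand), $\xx/H\to\xx/G$ is an elementary abelian $p$-cover of $\p^1$ ramified only at one point. Its ramification jump is $1$, so by Riemann--Hurwitz its genus is $0$. Thus $\kk(\xx/H)=\kk(x)$ with $P$ lying over $x=\infty$.

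Then $\xx\to\xx/H$ is an Artin--Schreier extension $y^p-y=f(x)$, with $f$ a polynomial since only $\infty$ ramifies. Its lower jump is $p+1$, since the ramification groups of $H$ are the $G_P^{(i)}$ for $i\le p+1$. Hence $\deg f=p+1$ after reduction, and we may kill monomials of degree divisible by $p$ in $f$. So
$$
f=\lambda x^{p+1}+\sum_{i\le p-1} a_i x^i .
$$
Using the affine change $x\mapsto \mu x+\nu$ and $y\mapsto y+h(x)$, I normalize $\lambda=1$ and eliminate the linear and constant terms via $h$. It remains to eliminate $x^3,\dots,x^{p-1}$, which I will do by exploiting the action of $G/H$. $G/H$ acts on $\kk(x)$ by translations $x\mapsto x+c$, because it fixes $\infty$ and is a $p$-group, and it lifts to $\xx$. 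The lifting condition says that $f(x+c)-f(x)\in\{\phi^p-\phi\}+\F_p$. Comparing degrees, the difference has degree at most $p$. The terms $x^i$ with $3\le i\le p-1$ produce terms of degree $i-1\ge 2$ not of the form $\phi^p-\phi$, and these cannot be cancelled. This forces $a_i=0$ for $i\ge3$ and leaves $y^p-y=x^{p+1}+ax^2$.

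This last coefficient comparison is the main obstacle, and it needs care. Since $c\neq0$, the contribution of $i a_i c\, x^{i-1}$ for the top nonzero $i\ge3$ must vanish, which gives $a_i=0$ by descending induction.
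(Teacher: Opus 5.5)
Your proof is correct, and your converse takes a genuinely different route from the paper's.

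\textbf{The paper's route.} It first uses \cite[Theorem 15]{DGT} to get $G\cong\zz_p\times\zz_p$. It then applies the Garcia--Stichtenoth formula $g=\sum_{i=1}^{p+1}g(\Ei)$ over the $p+1$ intermediate Artin--Schreier fields, which forces $m_1=1$ and $m_i=2$ for $i\ge 2$. Next it writes $\E_1=\kk(z)$ with $z^p-z=u$ and $\E_2=\kk(u,w)$ with $w^p-w$ quadratic in $u$. Substituting $u=z^p-z$ in the compositum $\E_1\E_2=\kk(\xx)$ gives $y^p-y=x^{p+1}+ax^2+bx$.

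\textbf{Your route.} You take the lower filtration (jumps at $1$ and $p+1$) from the proof of Theorem \ref{main1}. You identify the rational intermediate field intrinsically as $\kk(\xx)^{G_P^{(2)}}$, via Herbrand or equivalently via transitivity of the different, which gives exponent $2(p-1)$ downstairs. You then present $\kk(\xx)$ as a single Artin--Schreier extension of $\kk(x)$ with jump $p+1$. Finally the residual translations $x\mapsto x+c$ coming from $G/H$ kill $x^3,\dots,x^{p-1}$ by descending induction.

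\textbf{Comparison.} Your argument needs neither the non-cyclicity of $G$ nor \cite{GS}, and it explains structurally why only the $x^2$ term survives. The paper's argument is more computational, but its compositum technique is reused in Proposition \ref{b2}.

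\textbf{Small repairs.}

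In the ``if'' part, the lifting condition for $x\mapsto x+c$ is $c^p+2ac+c^{1/p}=0$, i.e.\ $c^{p^2}+2a^pc^p+c=0$. This polynomial has $p^2$ roots, not the $p$ roots of $c^p+c=0$. Together with $y\mapsto y+b$, $b\in\F_p$, you therefore get a group of order $p^3$ (the Heisenberg group of Theorem \ref{main3}), not $p^2$. Your fallback still works: the center together with one lifted nontrivial translation gives a subgroup of order $p^2$. In any case, under the standing hypothesis the ``if'' direction only needs the genus computation.

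In the converse, the correct lifting condition is $f(x+c)=jf(x)+\phi^p-\phi$ with $j\in\F_p^*$ and $\phi\in\kk[x]$; the ``$+\F_p$'' in your version is spurious. Comparing $x^{p+1}$-coefficients gives $j=1$. After that your degree argument goes through: $\deg\phi=1$, so $\phi^p-\phi$ has monomials only in degrees $0,1,p$.

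Finally, $y\mapsto y+h(x)$ alone cannot remove the linear term, because for fixed $x$ the reduced form is unique up to constants. You need to combine it with $x\mapsto x+\nu$, where $\nu^{1/p}$ is a root of a polynomial of degree $p^2$. This is exactly the paper's last step, with $T^{p^2}+2aT^p+T+b$, and it is simplest to do it after the $a_i$ with $i\ge 3$ have been killed.
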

\begin{proof}
By \cite[Proposition 3.7.8]{St} we immediately have that $\yy_{a}$ has genus $g=p(p-1)/2$, whence $p=\frac{1+\sqrt{8g+1}}{2}$. Moreover, it will follow from Theorem \ref{main3} that $\aut(\yy_{a})$ admits a subgroup of order $p^2$. 

Conversely, assume that $p=\frac{1+\sqrt{8g+1}}{2}$. Then $p^2>2g+1$. In particular, $p>2$ and by \cite[Theorem 15]{DGT}, $G$ is not cyclic. By the proof of Theorem \ref{main1}, $p$ assumes this value only if $\bar{g}=0$, $r=1$ and $s=0$, where $\bar{g}$, $r$ and $s$ are respectively the genus of $\xx/G$, the number of short orbits of $G$ of size $1$ and the number of short orbits of $G$ of size $p$. 
Since $G \cong \zz_p \times \zz_p$, there are precisely $p+1$ distinct subgroups $H_1,\ldots,H_{p+1}$ of $G$ of order $p$. For each $i \in\{1,\ldots,p+1\}$, set $\Ei:=\kk(\xx)^{H_i}$ and denote by $g(\Ei)$ the genus of $\Ei$. Further, let  $u \in \kk(\xx)$ such that $\kk(\xx)^G=\kk(u)$. Then for each $i$, $\Ei/\kk(u)$ is a cyclic extension with $[\Ei:\kk(u)]=p$, that is, $\Ei/\kk(u)$ is an Artin-Schreier extension. We may assume that the unique ramified point $P\in \xx$ lies over $P_\infty=(1:0) \in \xx/G$. From \cite[Proposition 3.7.8]{St} for all $i\in \{1,\ldots,p+1\}$ there exists a positive integer $m_i$ with $p\nmid m_i$ such that
$$
g(\Ei)=\frac{p-1}{2}(m_i-1).
$$
As a matter of fact, such $m_i$ is defined as follows: if $P^{(i)}_\infty \in \xx/H_i$ is the unique point over $P_\infty$, then the different exponent of $P^{(i)}_\infty$ is given by $d_{P^{(i)}_\infty}=(p-1)(m_i+1)$. By \cite[Theorem 2.1]{GS} we have
\begin{equation}\label{gen}
g=\sum_{i=1}^{p+1}g(\Ei).
\end{equation}
Therefore
\begin{equation}\label{gen2}
\frac{p(p-1)}{2}=g=\sum_{i=1}^{p+1}g(\Ei)=\sum_{i=1}^{p+1}\frac{p-1}{2}(m_i-1) \ \ \Lra \ \  p=\sum_{i=1}^{p+1}(m_i-1).
\end{equation}
From \eqref{gen2} we have that $m_i=1$ for at least one $i$. Without loss of generality, let us suppose that $m_1=1$. Then $g(\E_1)=0$, that is, $\E_1$ is a rational function field. The Artin-Schreier Theory (see e.g. \cite[Section 2]{Ma}) yields that there exists $z \in \E_1$ such that $\E_1=\kk(u,z)$ with
\begin{equation}\label{as}
z^p-z=f(u)=\frac{h(u)}{\prod\limits_{i=1}^{k}(u-a_i)^{\lambda_i}},
\end{equation}
where $p \nmid \lambda_i$ for $i\in\{1,\ldots,k\}$ and $h(u) \in \kk[u]$ is such that $\deg(h(u))-\sum _{i=1}^{k}\lambda_i$ is either negative, zero or relatively prime to $p$ (in this case $f(u)$ is said to be in the standard form). It should be noted that such expression for $f(u)$ can be obtained only adjusting the variable $z$. By \cite[Proposition 3.7.8]{St}, $-\lambda_i=v_{P_{a_i}}(f(u))=0$ and $-1=-m_1=v_{P_\infty}(f(u))=-\deg(h(u))$, where $v_Q$ denotes the discrete valuation at the place $Q$. Hence $f(u)=\alpha u+\beta$ for $\alpha, \beta \in \kk$, $\alpha \neq 0$. If $\gamma \in \kk$ is a root of the polynomial $T^p-T-\beta$, up the formal replacement of $z-\gamma$ with $z$ and $\alpha u$ with $u$, we obtain that $\E_1=\kk(z)$ with
$$
z^p-z=u.
$$
We now claim that $m_i>1$ for all $i\in\{2,\ldots,p+1\}$. In fact, if $m_i=1$ for some such $i$, say $i=2$, we have that $g(\E_2)=0$ and arguing exactly as above we conclude that $\E_2=\kk(w)$, where $w^p-w=\xi u+\delta$ for $\xi, \delta \in \kk$, $\xi \neq 0$, for some $w \in \E_2$. Let $\eta \in \kk$ be a root of $T^p-T-\delta$ and set $\tilde{w}:=w-\xi^{1/p}z-\eta$. Since $\kk(\xx)=\E_1 \cdot \E_2$, we have that $\kk(\xx)=\kk(z,w)=\kk(z,\tilde{w})$. Now 
\begin{eqnarray}
\tilde{w}^p-\tilde{w} & = & (w-\xi^{1/p}z-\eta)^p-(w-\xi^{1/p}z-\eta) \nonumber \\
                               &=& (w^p-w)-(\xi z^p-\xi^{1/p}z)-(\eta^p-\eta) \nonumber\\
                               &=& \xi (z^p-z)+\delta-\xi z^p + \xi^{1/p}z -\delta \nonumber\\
                               &=& (\xi^{1/p}-\xi)z, \nonumber
\end{eqnarray}
implying that $\kk(\xx)=\kk(z,\tilde{w})=\kk(\tilde{w})$, which is impossible since $\xx$ has genus $g\geq 2$. Hence the claim is proved. From $m_1=0$, $m_i>1$ for all $i\in\{2,\ldots,p+1\}$ and \eqref{gen2} we conclude that $m_i=2$ for all $i\in\{2,\ldots,p+1\}$. In particular, $m_2=2$. Let $w \in \E_2$ such that $\E_2=\kk(u,w)$ and $w^p-w=f(u)$ for some $f(u) \in \kk(u)$ of the same type of the right side of \eqref{as}. Since $m_2=2$, arguing as we did to obtain a generating equation for $\E_1$, we conclude that $\E_2=\kk(u,w)$ with
\begin{equation}\label{eqqf}
w^p-w=\alpha_2u^2+\alpha_1u+\alpha_0, 
\end{equation}
for $\alpha_i \in \kk$, $\alpha_2 \neq 0$. Note that $\kk(\xx)=\E_1 \cdot \E_2=\kk(z)\cdot \kk(u,w)=\kk(z,w)$, as $u=z^p-z \in \kk(z)$. Replacing $u=z^p-z $ in \eqref{eqqf} gives 
\begin{eqnarray}
w^p-w&=&\alpha_2(z^p-z)^2+\alpha_1(z^p-z)+\alpha_0 \nonumber \\
         &=& (\alpha_2^{1/p}z^2+\alpha_1^{1/p}z)^p-2\alpha_2z^{p+1}+\alpha_2z^2-\alpha_1z+\alpha_0.\nonumber
\end{eqnarray}
Define $\bar{w}:=w-(\alpha_2^{1/p}z^2+\alpha_1^{1/p}z)$. Then $\kk(\xx)=\kk(z,\bar{w})$ and
$$
\bar{w}^p-\bar{w}=-2\alpha_2z^{p+1}+(\alpha_2^{1/p}+\alpha_2)z^2+(\alpha_1^{1/p}-\alpha_1)z+\alpha_0.
$$
Now let $x:=(-2\alpha_2)^{1/(p+1)}z$ and $y:=\bar{w}-\zeta$, where $\zeta \in \kk$ is a root of $T^p-T-\alpha_0$. Then $\kk(\xx)=\kk(x,y)$ where
$$
y^p-y=x^{p+1}+ax^2+bx,
$$
for certain $a,b \in \kk$. Finally, let $e \in \kk$ be a root of the polynomial $T^{p^2}+2aT^p+T+b$ and $f \in \kk$ a root of $T^p-T+(e^{p^2+p}+ae^{2p})$. Then defining $\tilde{x}=x-e^p$ and $\tilde{y}=y-ex-f$, we obtain that $\kk(x,y)=\kk(\tilde{x},\tilde{y})$ where
$$
\tilde{y}^p-\tilde{y}=\tilde{x}^{p+1}+a\tilde{x}^2.
$$
This finishes the proof.
\end{proof}

To deal with the characterization of the curves attaining the second largest bound in wild case, we need the following result.

\begin{lem}\label{bnz}
Assume $p>2$.The curve $\cc_{a,b}: \big((x^p-x)-a(y^p-y)\big)^2=b^2(y^p-y)$
where $a,b \in \kk$, with $a \notin \mathbb{F}_p$ and $b \neq 0$ has genus $g=(p^2-1)/2$.
\end{lem}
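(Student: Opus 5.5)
The plan is to compute the genus of $\cc_{a,b}$ through a tower of function fields and the genus decomposition formula \eqref{gen} for elementary abelian $p$-extensions \cite[Theorem 2.1]{GS}. Put $u:=y^p-y$ and $v:=x^p-x$. The defining equation gives $(v-au)^2=b^2u$. Set $t:=(v-au)/b$; then $u=t^2$ and $v=at^2+bt$, so $\kk(u,v)=\kk(t)$ is rational. Hence $F:=\kk(\cc_{a,b})=\kk(t,x,y)$, where
$$
y^p-y=t^2,\qquad x^p-x=at^2+bt .
$$
The additive group $\F_p^2$ acts by $(x,y)\mapsto(x+i,y+j)$, so $F/\kk(t)$ is Galois with group $G\cong\zz_p\times\zz_p$, provided it has degree $p^2$. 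The first task is therefore to show that $[F:\kk(t)]=p^2$, and at the same time that the curve is irreducible.

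For this I would consider the intermediate Artin--Schreier extensions. These are $\E_{(\lambda,\mu)}=\kk(t,\lambda x+\mu y)$ for $(\lambda,\mu)\in\F_p^2\setminus\{0\}$ taken up to scalars, which gives $p+1$ fields. Each is defined by
$$
w^p-w=(\lambda a+\mu)t^2+\lambda b t .
$$
Since $a\notin\F_p$, the coefficient $\lambda a+\mu$ is nonzero for every admissible pair $(\lambda,\mu)$. The right-hand side is then a polynomial of degree $2$, prime to $p$ because $p>2$, so it is not of the form $h^p-h+c$. By Artin--Schreier theory this gives $[F:\kk(t)]=p^2$, and it also shows that every subextension is nontrivial. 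By \cite[Proposition 3.7.8]{St}, each $\E_{(\lambda,\mu)}$ is ramified only over $t=\infty$, with $m=2$, so its genus is $(p-1)(2-1)/2=(p-1)/2$.

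Summing via \eqref{gen} over the $p+1$ intermediate fields, with $\kk(t)$ rational, gives
$$
g=(p+1)\frac{p-1}{2}=\frac{p^2-1}{2}.
$$
The main point to get right is the applicability of \cite[Theorem 2.1]{GS}, namely $g(F)=\sum g(\E_i)-p\,g(\kk(t))$, and here the last term vanishes. I would also make sure the field generated is really the function field of the given plane curve: the equation is a single irreducible polynomial, which follows from the degree count $[F:\kk(y)]$.

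As a cross-check, I would verify the answer directly with Riemann--Hurwitz. There is one totally ramified place over $t=\infty$. The lower ramification jumps are all equal to $2$, since every conductor is $2$ and the jump is the same for every subgroup. So $d=3(p^2-1)$, and then $2g-2=-2p^2+3(p^2-1)=p^2-3$, which agrees with the value above and matches the case $|G_P^{(2)}|=p^2$, $|G_P^{(3)}|=1$ in the proof of Theorem \ref{main1}(b).
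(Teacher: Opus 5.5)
Your proof is correct and follows the paper's argument. Like the paper, you parametrize the conic $(v-au)^2=b^2u$ by $u=t^2$, $v=at^2+bt$. You then identify the $p+1$ intermediate Artin--Schreier fields over $\kk(t)$, each of genus $(p-1)/2$ because the $t^2$-coefficient is nonzero when $a\notin\F_p$, and sum their genera via \cite[Theorem 2.1]{GS}.

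Two small differences are worth noting. Your parametrization of these fields by $\lambda x+\mu y$ with $(\lambda,\mu)\in\F_p^2$ makes the nonvanishing of $\lambda a+\mu$ more transparent than the paper's formula. Your degree count $[F:\kk(y)]=2p$ gives irreducibility directly, where the paper instead cites \cite[Lemma 2.2]{BNZ}.
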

\begin{proof}
This follows by an argument analogous to the one used in \cite[Example 3.9]{BNZ}. For sake of completeness,  we will include the proof here. Consider the conic $\cc: f(u,v)=0$, where $f(u,v)=(v-au)^2-b^2u$. Then $\cc$ can be parametrized by $u=t^2$ and $v=at^2+bt$, whence $\kk(\cc)=\kk(u,v)=\kk(t)$. We clearly have that $\cc_{a,b}$ is defined by the double Artin-Schreier cover of $\cc$ given by
 \[ \begin{cases}
y^p-y=u\\
x^p-x=v\\
f(u,v)=0.
\end{cases}\]
One can show that $\cc_{a,b}$ is irreducible via \cite[Lemma 2.2]{BNZ}. If $\{\mu_1,\ldots,\mu_{p+1}\} \subset \F_{p^2}^*$ denotes a set of representatives for the cosets of $\F_{p^2}^*$ modulo $\F_{p}^*$ and $\eta \in \F_{p^2}\backslash \F_p$, then by \cite[Section 2]{BNZ} the intermediate fields $\kk(t) \subset \Ei \subset \kk(x,y)$ such that $[\Ei:\kk(t)]=p$ are defined by $\Ei=\kk(t,w_i)$, where
$$
w_ i^p-w_i=(\mu_i^p-\mu_i+a((\mu_i \eta)^p-\mu_i \eta))t^2+((\mu_i \eta)^p-\mu_i \eta)bt.
$$
As $a \notin \F_p$, one can check that that $\mu_i^p-\mu_i+a((\mu_i \eta)^p-\mu_i \eta)\neq 0$ for all $i$. Thus by \cite[Proposition 3.7.8]{St} we have $g(\Ei)=(p-1)/2$ for $i=1,\ldots,p+1$. Therefore, from \cite[Theorem 2.1]{GS} we obtain
$$
g=\sum_{i=1}^{p+1}g(\Ei)=\frac{p^2-1}{2}.
$$
\end{proof}

\begin{prop}\label{b2}
Let $\xx$ be a curve of genus $g \geq 2$ defined over an algebraically closed field $\kk$ of characteristic $p \geq 0$.  Assume that there exists a subgroup $G\leq \aut(\xx)$ such that $|G|=p^2$. Then  $p=\sqrt{2g+1}$ if and only if $p>2$ and $\xx$ is birationally equivalent to a curve of the family
$$
\cc_{a,b}: \big((x^p-x)-a(y^p-y)\big)^2=b^2(y^p-y),
$$
where $a,b \in \kk$, with $a \notin \mathbb{F}_p$ and $b \neq 0$.
\end{prop}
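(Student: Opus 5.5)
The plan is to prove the two directions separately; for the converse we reuse the case analysis from the wild part of the proof of Theorem \ref{main1}, and then follow the strategy of Proposition \ref{b1}.

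\emph{The curves $\cc_{a,b}$ attain the bound.} Lemma \ref{bnz} gives $g(\cc_{a,b})=(p^2-1)/2$, so $p=\sqrt{2g+1}$. For the group, note that $x\mapsto x+c$, $y\mapsto y+d$ with $c,d\in\F_p$ leaves $x^p-x$ and $y^p-y$ unchanged. These maps therefore give a subgroup $G\cong \zz_p\times\zz_p$ of $\aut(\cc_{a,b})$, and its fixed field is $\kk(t)$ with $t=\big((x^p-x)-a(y^p-y)\big)/b$. Finally, $p>2$ holds automatically, because $p^2=2g+1\geq 5$.

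\emph{Reduction of the converse to Artin--Schreier data.} Assume $p=\sqrt{2g+1}$. The proof of Theorem \ref{main1} shows that this value is attained only when $\bar g=0$, $r=1$, $s=0$, $G_P^{(2)}=G$ and $G_P^{(3)}=1$. So $G$ has a single lower ramification jump, equal to $2$.

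First I rule out $G$ cyclic. For $\zz_{p^2}$ the two lower jumps $i_1<i_2$ are distinct by Hasse--Arf: the upper jumps are integers $u_1<u_2$, and $i_2=i_1+p(u_2-u_1)$. Hence $G\cong\zz_p\times\zz_p$.

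Next, since the only upper jump is also $2$, Herbrand's theorem shows that every quotient by one of the $p+1$ subgroups $H_i$ has jump $2$, so $m_i=2$ for every $i$. As a check, write $\kk(\xx)^G=\kk(u)$ with the ramified point over $P_\infty$, and set $\Ei=\kk(\xx)^{H_i}$. Formula \eqref{gen} gives $\sum(m_i-1)=p+1$, while $m_i\le 2$ follows from the jump bound; together these force $m_i=2$ for all $i$.

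Arguing as in the proof of Proposition \ref{b1}, each $\Ei$ is then $\kk(u,w_i)$ with $w_i^p-w_i=f_i(u)$, where $f_i$ is a quadratic polynomial. Moreover, every nontrivial $\F_p$-combination of $f_1,f_2$ is Artin--Schreier equivalent to a polynomial of degree exactly $2$, since otherwise some $m_i$ would drop to $1$ or $0$. We also have $\kk(\xx)=\kk(u,w_1,w_2)$.

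\emph{Normalization.} Using $p$ odd, translate $u$ to complete the square in $f_1$, rescale, and remove constant terms by shifting $w_i$ by roots of $T^p-T-c$. This gives $f_1=t^2$ and $f_2=at^2+bt$, with $\kk(u)=\kk(t)$. The condition that $f_2-cf_1$ has degree $2$ for all $c\in\F_p$ gives $a\notin\F_p$. Setting $y=w_1$ and $x=w_2$, we get $y^p-y=t^2$ and $x^p-x=at^2+bt$. If $b\neq0$, then $t=\big((x^p-x)-a(y^p-y)\big)/b\in\kk(x,y)$, so $\kk(\xx)=\kk(x,y)$ and eliminating $t$ yields exactly $\cc_{a,b}$.

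\emph{Main obstacle.} The delicate point is guaranteeing $b\neq0$. If every class in the $\F_p$-span is a multiple of $(u-c)^2$ for one common centre $c$, the elimination fails: $\kk(x,y)$ would be only an index-$2$ subfield of $\kk(\xx)$. The first thing I would try is to use the freedom in choosing which two of the $p+1$ subfields play the roles of $y$ and $x$, together with Artin--Schreier equivalences that change the linear term (such as $\lambda t^2\sim\lambda^{1/p}t^{2/p}$ style adjustments, which are unavailable here), to obtain two quadratics with distinct centres. If that does not work, I would analyse separately the degenerate case in which all $f_i$ share a centre, and determine whether it is compatible with the hypotheses or must be excluded.
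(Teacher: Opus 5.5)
The step you single out as the main obstacle is a genuine gap, and it cannot be closed. The degenerate case $b=0$ does occur under the hypotheses, and it yields curves outside the family.

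\textbf{A counterexample.} Take $p>2$ and $\xx: v^2=u^{p^2}-u$.
\begin{itemize}
\item The right-hand side is separable of odd degree $p^2$, so $g=(p^2-1)/2$.
\item The translations $u\mapsto u+\rho$, $\rho\in\F_{p^2}$, form $G\cong\zz_p\times\zz_p$ with $\kk(\xx)^G=\kk(t)$, where $t=v$.
\item For $a\in\F_{p^2}\setminus\F_p$, the functions $y=u^p+u$ and $x=a^pu^p+au$ satisfy $y^p-y=t^2$ and $x^p-x=at^2$. This is exactly your normal form with $b=0$.
\item Every element of $V=\F_p t^2+\F_p\, a t^2$ is a multiple of $t^2$, so no other choice of two of the $p+1$ subfields helps.
\item The curve is hyperelliptic: $v\mapsto -v$ has fixed field $\kk(u)=\kk(x,y)$, because $x-a^py=(a-a^p)u$.
\end{itemize}

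\textbf{No $\cc_{a,b}$ with $b\neq 0$ is hyperelliptic.} Suppose it were, with hyperelliptic involution $\iota$.
\begin{itemize}
\item $\iota$ is central in the automorphism group, so it commutes with $G$ and restricts to $\kk(t)=\kk(\xx)^G$.
\item The restriction is nontrivial, since $|G|$ is odd. It fixes the unique branch place, so it has the form $t\mapsto -t+\mu$.
\item By the Artin--Schreier pairing between $G$ and $V$, an automorphism commuting with $G$ acts trivially on $V$.
\item Then $[(t-\mu)^2]=[t^2]$ forces $\mu=0$, and $[at^2-bt]=[at^2+bt]$ forces $b=0$.
\end{itemize}
So the ``only if'' direction is false as stated.

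\textbf{Where the paper goes wrong.} The paper's proof dismisses $b=0$ by asserting $\kk(t)\subsetneq\kk(x,y)$. That fails for exactly the reason you noticed: $\kk(x,y)$ contains $t^2$ but not $t$. The degenerate case actually gives the following. With $z=x-a^{1/p}y$ one has $\kk(x,y)=\kk(z)$, and $t^2=y^p-y$ is a separable additive polynomial of degree $p^2$ in $z$. These are the hyperelliptic curves $v^2=L(u)$, which must be added to the classification.

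\textbf{A second omission, also inherited from the paper.} Your reduction step claims $p=\sqrt{2g+1}$ is attained only with $\bar g=0$, $r=1$, $s=0$. This fails for $p=3$. There $g=4$ and $\sqrt g+1=\sqrt{2g+1}$, so the configuration $r=0$, $s=2$ in the wild case of the proof of Theorem \ref{main1} also attains the value. The Artin--Mumford curves $(y^3-y)(x^3-x)=c$ have genus $4$ and, by Deuring--Shafarevich, $p$-rank $4$. Every $\cc_{a,b}$ has $p$-rank $0$, so these are further exceptions.

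\textbf{What is sound.} The remaining steps are correct and follow essentially the paper's route, with some improvements.
\begin{itemize}
\item You exclude the cyclic case and obtain $m_i=2$ via Hasse--Arf and Herbrand, instead of the cited non-cyclicity result and transitivity of the different.
\item Your derivation of $a\notin\F_p$ from the degree condition on $V$ is cleaner than the paper's.
\end{itemize}
Completing the $b=0$ case as above, and adding the $p=3$ exception, turns your argument into a correct classification: $\cc_{a,b}$ with $b\neq0$, the hyperelliptic curves $v^2=L(u)$, and for $p=3$ also the Artin--Mumford curves.
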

\begin{proof}
The ``if'' part follows from Lemma \ref{bnz}. Now assume that $p=\sqrt{2g+1}$. It follows again from \cite[Theorem 15]{DGT} that $G$ is not cyclic. By the proof of Theorem \ref{main1}, $p$ assumes this value only if $\bar{g}=0$, $r=1$ and $s=0$, where $\bar{g}$, $r$ and $s$ are respectively the genus of $\xx/G$, the number of short orbits of $G$ of size $1$ and the number of short orbits of $G$ of size $p$. Moreover, if $P\in \xx$ is the unique ramified point, then 
\begin{equation}\label{ram}
|G_P^{(0)}|=|G_P^{(1)}|=|G_P^{(2)}|=p^2  \ \ \text{ and } \ \ |G_P^{(3)}|=1.
\end{equation}
Set $\kk(\xx)^G=\kk(t)$ and, as in the proof of Proposition \ref{b1}, for each $i=1,\ldots,p+1$ let $\Ei$ denote the intermediate fields $\kk(t) \subset \Ei \subset \kk(\xx)$ such that $[\Ei:\kk(t)]=p$. Then we have
$$
g(\Ei)=\frac{p-1}{2}(m_i-1).
$$
We claim that $m_i = 2$ for all $i=1,\ldots,p+1$. Indeed, if $P^{(i)}_\infty \in \xx/H_i$ is the unique point over $P_\infty$, then $d_{P^{(i)}_\infty}=(p-1)(m_i+1)$ (here $H_i \leq G$ is such that $\kk(\xx)^{H_i}=\Ei$). By the transitivity of the different exponent  (see \cite[Corollary 3.4.12]{St}), we have
\begin{equation}\label{diff}
d_P=|H_i| \cdot d_{P^{(i)}_\infty}+\sum_{j=0}^\infty \left( |(H_i)_P^{(j)}|-1\right).
\end{equation}
By \eqref{ram} we have $d_P=3(p^2-1)$. Since  $(H_i)_P^{(j)}=H_i \cap G_P^{(j)}$, then \eqref{ram} implies that $|(H_i)_P^{(0)}|=|(H_i)_P^{(1)}|=|(H_i)_P^{(2)}|=p$ and $|(H_i)_P^{(3)}|=1$. Now replacing $d_{P^{(i)}_\infty}=(p-1)(m_i+1)$ in \eqref{diff} gives the claim.

As in Proposition \ref{b1}, we have that $\kk(\xx)=\E_1 \cdot \E_2$, and there exist $x,y \in \kk(\xx)$, $a,b,c,d \in \kk$ with $a,c \neq 0$ such that $\E_1=\kk(t,y)$, $\E_2=\kk(t,x)$ where $y^p-y=ct^2+dt$ and 
$$x^p-x=at^2+bt.
$$
 Up to a scaling of $t$ followed by completing the squares and a translation of $y$, we may assume that $\E_1=\kk(t,y)$ with
\begin{equation}\label{t2}
y^p-y=t^2.
\end{equation}
Note that such a change of coordinates forces $b \neq 0$. Indeed, if $b=0$ then $x^p-x=a(y^p-y)$, which implies that $\kk(x,y)$ is a rational function field. However, in this case $\kk(t)\subsetneq \kk(x,y)\subsetneq \kk(\xx)$, thus $\kk(x,y)=\E_j$ for some $j$, contradicting the fact that $g(\E_j)=(p-1)/2$. In particular, 
$$
t=b^{-1}((x^p-x)-a(y^p-y)) \in \kk(x,y),
$$
whence $\kk(\xx)=\kk(t,x,y)=\kk(x,y)$. Moreover, replacing the equality above in \eqref{t2}, we obtain
$$
 \big((x^p-x)-a(y^p-y)\big)^2=b^2(y^p-y).
$$
It remains to prove that $a \notin \F_p$. If so, then $a^p=a$ and setting $z=x-ay$ we obtain $\kk(x,y)=\kk(y,z)$ where
\begin{equation}\label{fami}
(z^p-z)^2=b^2(y^p-y).
\end{equation}
It is not difficult to show that the curve defined by the equation \eqref{fami} is isomorphic to a curve of the family given in Proposition \ref{b1}, which is a contradiction since $\xx$ has genus $(p^2-1)/2$. Therefore $a \notin \F_p$.
\end{proof}

We end this section with the characterization of curves of genus $g\geq 2$ admitting automorphism groups of order $p^2=(\sqrt{g}+1)^2$.

\begin{prop}\label{b3}
Let $\xx$ be a curve of genus $g \geq 2$ defined over an algebraically closed field $\kk$ of characteristic $p \geq 0$.  Assume that there exists a subgroup $G\leq \aut(\xx)$ such that $|G|=p^2$. Then  $p=\sqrt{g}+1$ if and only if $p>2$ and $\xx$ is birationally equivalent to a curve of the Artin-Mumford type
$$
\mathcal{M}_c: (y^p-y)(x^p-x)=c,
$$
with $c\in \kk^*$.
\end{prop}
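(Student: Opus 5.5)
For the ``if'' direction I will compute the genus of $\mathcal{M}_c$ and exhibit a group of order $p^2$. The maps $(x,y)\mapsto(x+\lambda,y+\mu)$ with $\lambda,\mu\in\F_p$ preserve the equation $(y^p-y)(x^p-x)=c$. They therefore give a subgroup $G\cong \zz_p\times\zz_p$ of $\aut(\mathcal{M}_c)$. For the genus, I will view $\kk(x,y)$ as the compositum of $\kk(x)$ and $\kk(y)$ over $\kk(u)$, where $u=x^p-x$ and $y^p-y=c/u$. Both are Artin--Schreier extensions of $\kk(u)$. The extension $\kk(x)/\kk(u)$ ramifies only at $P_\infty$ with $m=1$, and $\kk(y)/\kk(u)$ ramifies only at $P_0$ with $m=1$. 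The remaining $p-1$ intermediate fields $\E_i=\kk(u,w_i)$ are given by $w_i^p-w_i=\lambda u+c\mu/u$ with $\lambda\mu\neq 0$. Each has $m$-values $1$ at both $P_0$ and $P_\infty$, so by \cite[Proposition 3.7.8]{St} its genus is $p-1$. Formula \eqref{gen} from \cite[Theorem 2.1]{GS} then gives $g=(p-1)(p-1)$, i.e. $p=\sqrt g+1$. Irreducibility follows because $\kk(x)$ and $\kk(y)$ are distinct degree-$p$ extensions of $\kk(u)$, so their compositum has degree $p^2$ over $\kk(u)$. This uses $c\ne0$. The case $p=2$ is excluded since $2<\sqrt g+1$.

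For the converse, assume $p=\sqrt g+1$. By the proof of Theorem \ref{main1}(b), this happens only when $\bar g=0$, $r=0$, $s=2$, and $G_P^{(2)}$ is trivial at every ramified point. Let $\Omega_1,\Omega_2$ be the two short orbits, each of size $p$. Since $G$ is abelian, all points of $\Omega_j$ have a common stabilizer $H^{(j)}$ of order $p$; in particular $G$ is not cyclic, since $r=0$ and a cyclic $p$-group would have a stabilizer-of-orbit structure incompatible with Riemann--Hurwitz here.

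The step I expect to be the main obstacle is showing $H^{(1)}\neq H^{(2)}$. If $H^{(1)}=H^{(2)}=H$, then every other subgroup $H'$ of order $p$ acts freely on $\xx$. Then $\xx/H'\to\xx/G$ would be an Artin--Schreier cover in which the two branch points have ramification index $1$, i.e. an unramified degree-$p$ cover of $\p^1$, which is impossible. Hence $H^{(1)}\neq H^{(2)}$ and $G=H^{(1)}\times H^{(2)}$.

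Next I will set $\kk(\xx)^G=\kk(u)$ and place the branch points at $P_\infty$ (under $\Omega_1$) and $P_0$ (under $\Omega_2$). Consider $\E=\kk(\xx)^{H^{(1)}}$. Over $P_\infty$, the points of $\Omega_1$ have stabilizer $H^{(1)}$, so $\E/\kk(u)$ is unramified at $P_\infty$. Over $P_0$, the stabilizer $H^{(2)}$ maps isomorphically onto $G/H^{(1)}$, so $\E/\kk(u)$ is ramified at $P_0$. Thus $\E/\kk(u)$ ramifies only at $P_0$. The condition $G_P^{(2)}=1$, combined with transitivity of the different as in \eqref{diff}, gives $m=1$ there. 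Hence $\E$ is rational, and $\E=\kk(y)$ with $y^p-y=c/u$ after normalizing as in Proposition \ref{b1}. In the same way, $\kk(\xx)^{H^{(2)}}=\kk(x)$ with $x^p-x=u$. Since $H^{(1)}\cap H^{(2)}=1$, we get $\kk(\xx)=\kk(x,y)$, and eliminating $u$ yields $(y^p-y)(x^p-x)=c$ with $c\neq0$.
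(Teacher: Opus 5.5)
The step you single out as the main obstacle is argued the wrong way round. If $H^{(1)}=H^{(2)}=H$ and $H'\neq H$ has order $p$, then $H'$ does act freely. But this makes $\xx\to\xx/H'$ unramified, so the inertia group of $\xx/H'\to\xx/G$ over each branch point is the image $G_PH'/H'$ of $G_P=H$, which has order $p$. That cover is therefore totally ramified at both branch points, not unramified. Riemann--Hurwitz for it is perfectly consistent ($g(\xx/H')=p-1$, with $m=1$ at both points), so no contradiction comes from $H'$.

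The cover that is unramified is $\xx/H\to\xx/G$. Every inertia group equals $H$ and so has trivial image in $G/H$, and an unramified degree-$p$ cover of $\p^1$ would give $2g(\xx/H)-2=-2p$, which is impossible. This is the paper's argument. Applied to the unique subgroup of order $p$, it is also what rules out $G$ cyclic, which you only gesture at.

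Once this is repaired, the rest of your converse is correct and more self-contained than the paper's. You locate the single branch point of each rational quotient: $\kk(\xx)^{H^{(1)}}$ ramifies only over $P_0$ and $\kk(\xx)^{H^{(2)}}$ only over $P_\infty$, each with $m=1$. This gives $x^p-x=u$ and $y^p-y=c/u$ directly. The paper instead shows both quotients are rational, invokes \cite[Theorem 6]{VM}, and discards two other families by genus. Likewise, your genus count via \eqref{gen} replaces the paper's citation of \cite{AK}.

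Independently, there is a problem with the reduction you take from the proof of Theorem \ref{main1}(b), namely that $p=\sqrt g+1$ forces $\bar g=0$, $r=0$, $s=2$. This fails for $p=3$. Then $g=4$ and $\sqrt{2g+1}=3=\sqrt g+1$, so the configuration $\bar g=0$, $r=1$, $s=0$, $|G_P^{(2)}|=p^2$, $G_P^{(3)}=1$ (which gives $g=(p^2-1)/2$) attains the value as well.

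This configuration is realized by $\cc_{a,b}$ in characteristic $3$:
\begin{itemize}
\item it has genus $4$ by Lemma \ref{bnz};
\item it carries the group $(x,y)\mapsto(x+\lambda,y+\mu)$ with $\lambda,\mu\in\F_3$;
\item it has $3$-rank $0$ by \eqref{dsg}, since there is a single short orbit, of size $1$.
\end{itemize}
By contrast, $\mathcal{M}_c$ is ordinary: \eqref{dsg} with two short orbits of size $p$ gives $\gamma=(p-1)^2$. Since the $p$-rank is a birational invariant, the ``only if'' direction of the statement is actually false for $p=3$, and the paper's proof has the same oversight. Your argument, repaired as above, establishes the proposition for $p\geq 5$. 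For $p=3$ the correct conclusion is ``$\mathcal{M}_c$ or $\cc_{a,b}$''.
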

\begin{proof}
It is well known that the Artin-Mumford curve has a subgroup $G$ of order $p^2$ and genus $g=(p-1)^2$, see e.g. \cite[Proposition 7]{AK} (as a matter of fact, this is not difficult to check). So let us assume that $\xx$ is a curve of genus $g =(p-1)^2 \geq 2$ having a subgroup of automorphisms $G$ of order $p^2$. From the proof of Theorem \ref{main1}, $p=\sqrt{g}+1$ holds only if $\bar{g}=0$, $r=0$, $s=2$ and $G_P^{(2)}$ is trivial for all $P \in \xx$. Let us denote the two short orbits of size $p$ of $G$ on $\xx$ by $\Omega_1$ and $\Omega_2$. We first show that $G$ is not cyclic. To this end, assume on the contrary, that $G$ is cyclic.  Then the unique subgroup $H \leq G$ of order $p$ must fix every point on $\Omega_1 \cup \Omega_2$. Consequently, the cover $\xx/H \lra \xx/G$ is unramified, which is a contradiction since $\xx/G$ is a rational curve. Thus $G\cong \zz_p \times \zz_p$. Since $G$ is an abelian group, every point on $\Omega_1$ share the same stabilizer in $G$, say $H_1$. If $H_2$ is the common stabilizer in $G$ of the points on $\Omega_2$, then $H_1 \neq H_2$, otherwise we would have again that $\xx/H_1 \lra \xx/G$ is unramified. For $i=1,2$, since $H_i \leq G_P$  for $P \in \Omega_i$, we have that $(H_i)_P^{(j)}=H_i \cap G_P^{(j)}$ for all such $P$ and for all $j \geq 0$. In particular, we conclude that
$$
|(H_i)_P^{(0)}|=|(H_i)_P^{(1)}|=p \ \ \text{ and } \ \  |(H_i)_P^{(2)}|=1
$$
for $i=1,2$. Hence the Riemann-Hurwitz formula \eqref{rhg} implies that both $\xx/H_1$ and $\xx/H_2$ are rational curves. It follows directly from \cite[Theorem 6]{VM} that $\kk(\xx)=\kk(x,y)$, where either 
$$
(y^p-y)(x^p-x)=c, \text{ with $c\in \kk^*$ },
$$
or
$$
y^3-y=\frac{i}{x(x-1)}, \text{ where $i^2=2$}, \ \ \text{ or } y^p-y=\frac{1}{x^\lambda}, \text{ with $\lambda \mid p+1$}.
$$
By computing the genera of the last two curves above via \cite[Proposition 3.7.8]{St}, we obtain respectively $g=(p-1)(\lambda-1)/2$ and $g=2$, so these possibilities can be ruled out. This finishes the proof.
\end{proof}

\begin{rem}
By a direct application of the Deuring-Shafarevich formula \eqref{dsg} to the cover $\xx \lra \xx/G$ we obtain that the curves characterized in Propositions \ref{b1} and \ref{b2} have zero $p$-rank. Therefore, if one requires that a curve $\xx$ of genus $g\geq 2$ defined over an algebraically closed field $\kk$ of characteristic $p >2$ has positive $p$-rank and subgroup $G\leq \aut(\xx)$ such that $|G|=p^2$, then  $p\leq\sqrt{g}+1$ and equality holds if and only if $\xx$ is birationally equivalent to the Artin-Mumford curve $\mathcal{M}_c: (y^p-y)(x^p-x)=c,$ with $c\in \kk^*$.
\end{rem}

 \section{Proof of Theorem \ref{main3}}\label{pm3}
 
 Let $\ff$ be a plane curve and for $P \in \ff$, denote by $\mathcal{T}_P$ the tangent line to $\ff$ at $P$ . We say that the curve $\ff$ is classical if for a general point $P \in \ff$ the intersection multiplicity of $\ff$ with $\mathcal{T}_P$ at $P$ is $2$. Otherwise, we say that $\ff$ is nonclassical. Let $\kk(\ff)=\kk(x,y)$. Then at least one of the elements $x$ and $y$ is separating. If $x$ is a separating element, it follows from \cite{SV} that $\ff$ is nonclassical if, and only if, $D_x^{(2)}y=0$, where $D_x^{(j)}f$ stands for the $j$-th Hasse derivative of the function $f \in \kk(\ff)$ with respect to $x$.
 
 \begin{lem}\label{nc}
 Assume that $\Char(\kk)=p>2$ and let $\yy_{a}:y^p-y=x^{p+1}+ax^2$. Then $\yy_a$ is nonclassical if, and only if, $a=0$.
 \end{lem}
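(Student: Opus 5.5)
We use the Stöhr–Voloch criterion quoted just before the statement. The element $x$ is separating for $\kk(\yy_a)$: the extension $\kk(x,y):\kk(x)$ is Artin–Schreier of degree $p$, hence separable. So it suffices to compute the second Hasse derivative $D_x^{(2)}y$ and show it vanishes identically if and only if $a=0$.

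\textbf{First derivative.} Apply $D_x^{(1)}$ to $y^p-y=x^{p+1}+ax^2$. Since $D_x^{(1)}(y^p)=p\,y^{p-1}D_x^{(1)}y=0$ and $(p+1)x^p=x^p$ in characteristic $p$, we get
$$
-D_x^{(1)}y=x^p+2ax, \qquad\text{that is,}\qquad D_x^{(1)}y=-x^p-2ax.
$$

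\textbf{Second derivative.} Rather than differentiating this expression directly, apply $D_x^{(2)}$ to the defining equation. Hasse derivatives satisfy $D^{(j)}(f^p)=\big(D^{(j/p)}f\big)^p$ when $p\mid j$ and $D^{(j)}(f^p)=0$ otherwise. Since $p>2$, we have $p\nmid 2$, so $D_x^{(2)}(y^p)=0$. On the right-hand side,
$$
D_x^{(2)}x^{p+1}=\binom{p+1}{2}x^{p-1}=\frac{(p+1)p}{2}\,x^{p-1}=0 \pmod p,
$$
using $p>2$ so that $2$ is invertible. Also $D_x^{(2)}(ax^2)=a$. Hence
$$
-D_x^{(2)}y=a, \qquad\text{that is,}\qquad D_x^{(2)}y=-a.
$$
This is identically zero exactly when $a=0$, so $\yy_a$ is nonclassical if and only if $a=0$.

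\textbf{Points to check.} The only delicate step is justifying the Hasse-derivative rules used, namely linearity, $D^{(j)}(x^n)=\binom{n}{j}x^{n-j}$, and the rule for $p$-th powers. All three are standard (e.g. in \cite{HKT}). The criterion from \cite{SV} applies to the plane model $\yy_a$, which is irreducible because its function field is the Artin–Schreier extension given by this equation. It is not necessary for the model to be smooth.
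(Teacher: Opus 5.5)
Your proof is correct and follows the paper's argument: both apply the Stöhr--Voloch criterion with $x$ as separating variable and arrive at $D_x^{(2)}y=-a$. The only difference is cosmetic: the paper applies $D_x^{(1)}$ twice and uses $D_x^{(1)}\circ D_x^{(1)}=2D_x^{(2)}$ with $2$ invertible, whereas you apply $D_x^{(2)}$ directly, using the $p$-th power rule and $\binom{p+1}{2}\equiv 0 \pmod p$.
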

 \begin{proof}
  Applying $D_x^{(1)}$ in both sides of equality $y^p-y=x^{p+1}+ax^2$ gives
 \begin{equation}\label{hasse}
 - D_x^{(1)} y=x^p+2ax.
 \end{equation}
 Applying $D_x^{(1)}$ in both sides of \eqref{hasse} implies
 $
 - D_x^{(1)}(D_x^{(1)} y)=2a.
 $
 Taking into account that $D_x^{(1)} \circ D_x^{(1)}=2D_x^{(2)}$, we obtain $D_x^{(2)}y=-a$. Result then follows from the statement in the first paragraph of the section. 
 \end{proof}
 
 Using the lemma above, we can obtain the following.
 
 \begin{prop}\label{isonc}
  Assume that $\Char(\kk)=p>2$ and let $\yy_{a}:y^p-y=x^{p+1}+ax^2$. Then $\yy_a$ birationally equivalent to the Hermitian curve $\mathcal{H}:v^p+v=u^{p+1}$ if, and only if, $a=0$.
 \end{prop}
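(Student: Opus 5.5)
The plan is to combine Lemma \ref{nc} with Theorem \ref{chang}, so that the question becomes one about plane models of degree $p+1$.

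\emph{The case $a=0$.} Here the claim is direct. Pick $\epsilon\in\kk$ with $\epsilon^{p-1}=-1$ and set $v=\epsilon y$. Then
$$v^p+v=\epsilon^p y^p+\epsilon y=\epsilon(y-y^p)=-\epsilon x^{p+1}.$$
Choose $\lambda$ with $\lambda^{p+1}=-\epsilon$ and put $u=\lambda x$; this gives $v^p+v=u^{p+1}$. Since these substitutions are invertible linear changes of coordinates, $\kk(\yy_0)=\kk(\mathcal H)$.

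\emph{The case $a\neq 0$.} I would argue by contradiction. Suppose $\yy_a$ is birationally equivalent to $\mathcal H$. The key observation is that both curves are nonsingular plane curves of degree $p+1$.

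For $\mathcal H$ this is classical. For $\yy_a$ it needs a check. The affine partial derivatives of $F=y^p-y-x^{p+1}-ax^2$ are $F_y=-1\neq 0$, so the affine part is smooth. At infinity, the homogenization is
$$y^pz-yz^p-x^{p+1}-ax^2z^{p-1}.$$
The only point at infinity is $(0:1:0)$. In the chart $y=1$ the local equation is $z-z^p-x^{p+1}-ax^2z^{p-1}$, whose linear term $z$ is nonzero, so this point is also nonsingular. Since $p+1\geq 4$ for $p>2$, Theorem \ref{chang} applies and shows that $\yy_a$ and $\mathcal H$ are projectively equivalent.

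Classicality, meaning the generic intersection multiplicity with the tangent line, is preserved under projective transformations. The Hermitian curve is nonclassical; this follows from Lemma \ref{nc} applied to $\yy_0$, which we have just identified with $\mathcal H$ by a linear change of coordinates. On the other hand, Lemma \ref{nc} says $\yy_a$ is classical when $a\neq0$. This contradiction proves the claim.

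\emph{Where care is needed.} The main point to verify is the nonsingularity of $\yy_a$ at the point at infinity for every $a$, which is required to invoke Theorem \ref{chang}. The computation above indicates that it holds, because of the term $yz^p$ in the chart $y=1$. A smaller point is that the separating variable used in Lemma \ref{nc} is $x$. This is fine for the plane model as it stands, and classicality is intrinsic to the embedded curve, so projective equivalence transports it.
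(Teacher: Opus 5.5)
Your proposal is correct and follows essentially the same route as the paper: an explicit linear birational map for $a=0$, then Theorem~\ref{chang} to upgrade birational equivalence to projective equivalence, preservation of (non)classicality under projectivities, and Lemma~\ref{nc} to force $a=0$. You also check explicitly that $\yy_a$ is nonsingular at $(0:1:0)$, and you derive the nonclassicality of $\mathcal{H}$ from Lemma~\ref{nc} applied to $\yy_0$; both are details the paper leaves implicit.
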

 \begin{proof}
 Suppose  $a=0$. Let $c \in \kk$ such that $c^{p-1}=-1$ and let $d \in \kk$ such that $d^{p+1}=-c$. Then the map $\varphi:\mathcal{H}\lra \yy_0$  given by $(u,v) \mapsto (du,cv)$ is birational. Conversely, assume that $\mathcal{H}$ is birationally equivalent to $\yy_a$. Then by Theorem \ref{chang}  $\mathcal{H}$ and $\yy_a$ are projectively equivalent. Since intersection multiplicities are preserved by projectivity and $\mathcal{H}$ is nonclassical, it follows that $\yy_a$ must be nonclassical. By Lemma \ref{nc} we conclude that $a=0$.
 \end{proof}
 
 Finally, we provide the full automorphism group of $\yy_a$ for $a\neq 0$.
 
 \begin{thm}
 Assume that $\Char(\kk)=p>2$ and let $\yy_{a}:y^p-y=x^{p+1}+ax^2$ with $a \neq 0$. Then $\aut(\yy_{a})$ fixes the only point at the infinity of $\yy_{a}$. More precisely, $\aut(\yy_{a})$ contains a unique normal Sylow $p$-subgroup $S_p$ which is isomorphic to the Heisenberg group of order $|S_p|=p^3$, and 
$$
\frac{\aut(\yy_{a})}{S_p} \cong \zz_{p-1}.
$$
 \end{thm}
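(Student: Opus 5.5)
We first exhibit explicit automorphisms of $\yy_a$, $a\neq 0$. Let $P_\infty$ be the unique point at infinity. For $(b,c)$ with $c^p-c = b^{p+1}+ab^2 + \dots$, we look for maps of the form $\sigma_{b,c}:(x,y)\mapsto (x+b,\; y+\lambda(b)x+c)$. Substituting into $y^p-y=x^{p+1}+ax^2$, the terms involving $x$ must cancel. This forces
$$\lambda(b)^p x^p-\lambda(b)x = b x^p + b^p x + 2abx,$$
so $\lambda(b)^p=b$ and $-\lambda(b)=b^p+2ab$. Eliminating $\lambda$ gives
$$b^{p^2}+2a^pb^p+b=0,$$
an additive (hence $\F_p$-linear) separable polynomial of degree $p^2$. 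Its roots form a group $V\cong \F_p^2$. For each $b\in V$ we then need $c^p-c=b^{p+1}+ab^2$, which has $p$ solutions. Together with the translations $y\mapsto y+c$, $c\in\F_p$, we get a group $S$ of order $p^3$ fixing $P_\infty$. A commutator computation shows that $[\sigma_{b,c},\sigma_{b',c'}]$ is the translation $y\mapsto y+\lambda(b)b'-\lambda(b')b$. Since this is nontrivial for suitable $b,b'$, $S$ is nonabelian with center $\{y\mapsto y+c: c\in \F_p\}$, hence $S$ is the Heisenberg group (exponent $p$ because $p>2$). Next, $\tau_\mu:(x,y)\mapsto(\mu x,\mu^{p+1}y)$ preserves the curve iff $\mu^{p+1}=\mu^2$ and $\mu^{p+1}\in\F_p^*$, i.e. $\mu\in\F_p^*$. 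This gives a cyclic group $C\cong\zz_{p-1}$ normalizing $S$, and so $S\rtimes C\le \aut(\yy_a)_{P_\infty}$ of order $p^3(p-1)$.

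The main step is showing that every automorphism fixes $P_\infty$. We use Corollary \ref{corm}: the $p$-rank of $\yy_a$ is $0$, which we get from the Deuring--Shafarevich formula \eqref{dsg} applied to $S$. Since $\yy_a/S$ is rational and $S$ has a single short orbit $\{P_\infty\}$, we obtain $\gamma-1=-p^3+(p^3-1)$, so $\gamma=0$. With zero $p$-rank, every nontrivial $p$-subgroup of $\aut(\yy_a)$ fixes exactly one point, and that point is its unique fixed point [HKT, Lemma 11.129]. Hence $S$ lies in a Sylow $p$-subgroup $S_p$, and $S_p$ fixes $P_\infty$.

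Bounding $S_p$ is the delicate part. Since $\yy_{a}/S_p^{(1)}$ is rational with no other short orbits, Proposition \ref{nak} does not apply directly. Instead we use Nakajima's bound $|S_p|\le \frac{4p}{(p-1)^2}g^2$ for $\gamma=0$ [Na]. With $g=p(p-1)/2$ this gives $|S_p|\le p^3$, so $S_p=S$; this also proves Nakajima extremality. If some automorphism $\phi$ moved $P_\infty$, then $\phi S\phi^{-1}$ would be a second Sylow $p$-subgroup fixing $\phi(P_\infty)\neq P_\infty$. Then [HKT, Theorem 11.133] (zero $p$-rank and two Sylow $p$-subgroups) forces $\aut(\yy_a)$ to have a classified $2$-transitive action, i.e. to be isomorphic to $\operatorname{PGU}(3,p)$, and $\yy_a$ to be Hermitian. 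This contradicts Proposition \ref{isonc}. I expect this appeal to the classification of zero $p$-rank curves with non-normal Sylow $p$-subgroups to be the key obstacle. If that route proves awkward, I would instead compare $|\aut(\yy_a)|$ with the $16g^4$ and $8g^3$ bounds for curves fixing no point.

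Once $\aut(\yy_a)=\aut(\yy_a)_{P_\infty}$, the general structure $G_P=G_P^{(1)}\rtimes C$ with $C$ cyclic of order prime to $p$ gives $S_p=S$ normal and $\aut(\yy_a)/S_p$ cyclic. To pin down $C$, note that $C$ acts on the Weierstrass semigroup at $P_\infty$, which is generated by $p$ and $p+1$. So every element of $C$ maps $x\mapsto \mu x+\nu$ and $y\mapsto \rho y+\dots$. After conjugating by $S$ to kill the translations, comparing coefficients of $x^{p+1}$ and $x^2$ gives $\rho=\mu^{p+1}=\mu^2$ and $\rho\in \F_p^*$, so $\mu\in\F_p^*$. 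Hence $|C|=p-1$, and $|\aut(\yy_a)|=p^3(p-1)$; this equals $4g^2+g(1+\sqrt{8g+1})$ by substituting $p=(1+\sqrt{8g+1})/2$.
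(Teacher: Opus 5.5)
\paragraph{Where the proof breaks.} The gap is the step meant to show that $\aut(\yy_a)$ fixes $P_\infty$. The hypotheses you give for the cited result, zero $p$-rank and two distinct Sylow $p$-subgroups, can at most yield a $2$-transitive action on the orbit of $P_\infty$. They cannot force $\aut(\yy_a)\cong\operatorname{PGU}(3,p)$ or $\yy_a\cong\mathcal H$, and under only those hypotheses the conclusion is false. For $p\ge 5$ the curve $y^2=x^p-x$ has zero $p$-rank: apply Deuring--Shafarevich to $x\mapsto x+1$, whose quotient is $\kk(y)$. The automorphisms $(x,y)\mapsto(x+1,y)$ and $(x,y)\mapsto(-1/x,\,y/x^{(p+1)/2})$ show that the $p$-subgroup fixing the point at infinity is conjugate to one fixing another point. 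Yet the curve has genus $(p-1)/2$ and is not Hermitian.

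What makes the classification apply is the \emph{size} of the $p$-subgroup, which you never use at this step. The paper uses exactly that: $|(\aut(\yy_a))_P^{(1)}|\ge p^2>p^2-p+1=2g+1$. Then [HKT, Theorem 11.140] gives that either $\aut(\yy_a)$ fixes $P$, or $\yy_a$ is the Hermitian, DLS or DLR curve. Hermitian is excluded by Proposition \ref{isonc}, DLS by $p>2$, and DLR (which occurs only for $p=3$, where $g(\yy_a)=3$) by genus. You never address the DLR case. Your fallback, ``compare with the $16g^4$ and $8g^3$ bounds'', has no content as stated, since nothing in your argument shows $|\aut(\yy_a)|$ is large.

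\paragraph{How to close it.} The fallback can be completed with what you already have. Suppose $\aut(\yy_a)\neq\aut(\yy_a)_{P_\infty}$, and let $\Omega$ be the orbit of $P_\infty$. Then $\Omega$ is $S_p$-invariant, and by zero $p$-rank $S_p=S$ acts semiregularly off $P_\infty$. Hence $|\Omega|\equiv 1\pmod{p^3}$, so $|\Omega|\ge p^3+1$. Together with $|\aut(\yy_a)_{P_\infty}|\ge|S\rtimes C|=p^3(p-1)$ this gives $|\aut(\yy_a)|\ge(p^3+1)p^3(p-1)>p^3(p-1)^3=8g^3$.

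Henn's classification of curves with $|\aut|\ge 8g^3$ then leaves, for $p>2$ and $g=p(p-1)/2$, only the Hermitian curve $y^p+y=x^{p+1}$. The family $y^2=x^{p^h}-x$ is excluded because its genus $(p^h-1)/2$ cannot equal $p(p-1)/2$. This contradicts Proposition \ref{isonc}.

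\paragraph{What is sound.} The explicit group $S\rtimes C$ is correct, and your commutator equals $bb'^p-b^pb'$, which is nonzero for $\F_p$-independent $b,b'$. The Heisenberg identification, the zero $p$-rank computation and the Nakajima bound all hold. Finding the complement through $L(pP_\infty)=\langle 1,x\rangle$ and $L((p+1)P_\infty)=\langle 1,x,y\rangle$ is a valid substitute for the paper's appeal to linearity of automorphisms of the smooth plane model. You don't even need to conjugate away translations: the coefficients of $y$, $x^{p+1}$ and $x^2$ already give $\rho\in\F_p^*$ and $\rho=\mu^{p+1}=\mu^2$.
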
 
 \begin{proof}
 Since $g=(p^2-p)/2$ and $p^2=|G|=|G_P^{(0)}|=|G_P^{(1)}|$, where $P=(0:1:0) \in \yy_a$  is the unique ramified point of $\yy_a \lra \yy_a/G$, we conclude that $|G_P^{(1)}|>2g+1$. Further, $G_P^{(1)} \leq (\aut(\yy_a))_P^{(1)}$ implies that $|(\aut(\yy_a))_P^{(1)}|>2g+1$. Then \cite[Theorem 11.140]{HKT} implies that either $\aut(\yy_a)=(\aut(\yy_a))_P$ or $\yy_a$ is birationally equivalent to one of the following curves:
 \begin{itemize}
 \item[(i)] The Hermitian curve $y^q+y=x^{q+1}$, where $q=p^h$, $h>0$ and $g=q(q-1)/2$.
 \item[(ii)] The DLS curve $y^q+y=x^{q_0}(x^q+x)$ where $p=2$, $q_0=2^r$ and $q=2q_0^2$, $g=q_0(q-1)$.
 \item[(iii)] The DLR curve $y^{n^2}-(1+(x^n-x)^{n-1})y^n+(x^n-x)^{n-1}y-x^n(x^n-x)^{n+3n_0}=0$ with $p=3$, $g=n_0(n-1)$, where $n_0=3^r$, $n=3n_0^2$, $r\geq 0$.
 \end{itemize}
 Since $a\neq 0$, (i) can be ruled out by Proposition \ref{isonc}, and (ii) is ruled out as well as we are assuming $p>2$. For $p=3$,  the genus of $\yy_a$ is $g=3$; thus (iii) can be ruled out as the DLR curve has even genus. Therefore, we conclude that $\aut(\yy_a)=(\aut(\yy_a))_P$. In particular, the Sylow $p$-subgroup $S_p$ of $\aut(\yy_a)$ is normal and $\aut(\yy_a)/S_p$ is cyclic of order prime to $p$. Now, note that $\yy_a$ is nonsingular. Then $\aut(\yy_a)$ can be seen as a subgroup of $\operatorname{PGL}(2,\kk)$. As $\aut(\yy_a)$ fixes $P=(0:1:0)$, we have that every automorphism of $\yy_a$ (in projective coordinates) is of the form
 \begin{equation}\label{autproj}
 (X:Y:Z) \mapsto (\alpha X+\beta Z: \kappa X+\delta Y+\epsilon Z: \phi X+\gamma Z),
 \end{equation}
 with $\alpha, \beta, \kappa, \delta, \epsilon,\phi, \gamma \in \kk$. Using the fact that right side of \eqref{autproj} is submitted to $Y^pZ-YZ^p=X^{p+1}+aX^2Z^{p-1}$, a straightforward computation shows that we must have $\phi=0$. Hence we may assume that $\gamma=1$ and every element of $\aut(\yy_a)$ (in affine coordinates) is of the form
 \begin{equation}\label{autaff}
 (x,y)\mapsto (\alpha x+\beta , \kappa x +\delta y+\epsilon).
 \end{equation}
 Again, the right side of \eqref{autaff} is submitted to $y^p-y=x^{p+1}+ax^2$. Thus we conclude that $\alpha \in \F_p^{*}$, $\delta=\alpha^2$,  $\beta=\kappa^p/\alpha$ and every automorphism of $\yy_a$ is given by
 \begin{equation}
 \sigma_{\alpha,\kappa,\epsilon}(x,y)=(\alpha x +\kappa^p/\alpha, \kappa x+\alpha^2 y+\epsilon),
 \end{equation}
 where $\kappa \in \kk$ is a root of the polynomial $h(T)=T^{p^2}+2aT^p+T$ and 
 $$
 \epsilon^p-\epsilon=\frac{\kappa^{p(p+1)}+a\kappa^{2p}}{\alpha^2}.
 $$
One can check directly that $\sigma_{1,\kappa,\epsilon}$ has order $p$ for all $\kappa, \epsilon$. Since the polynomial $h(T)$ is separable, it has $p^2$ distinct roots, and for each root $\kappa$ of $h(T)$ there are $p$ distinct possible values for $\epsilon$. Hence $\{\sigma_{1,\kappa,\epsilon} \ | \ h(\kappa)=0  \text{ and } \epsilon^p-\epsilon=\kappa^{p(p+1)}+a\kappa^{2p}\}$ is a subgroup of $S_p$ of order $p^3$. On the other hand, the Deuring-Shafarevich formula \eqref{dsg} applied to the cover $\yy_a \lra \yy_a/G$ proves that $\yy_a$ has zero $p$-rank. Thus the Nakajima bound for $S_p$ (\cite[Theorem 1]{Na}) reads
$$
|S_p| \leq \max\left\{g, \frac{4p}{p-1}^2g^2\right\}=p^3.
$$
Therefore, $S_p=\{\sigma_{1,\kappa,\epsilon} \ | \ h(\kappa)=0  \text{ and } \epsilon^p-\epsilon=\kappa^{p(p+1)}+a\kappa^{2p}\}$.  Now $\sigma_{1,\kappa,\epsilon},\sigma_{1,c,e}\in S_p$ commute if and only if $\{c,\kappa\}$ is linearly dependent over $\F_p$. Since both $\kappa$ and $c$ are roots of the additive polynomial $h(T)$ and the set of roots of this polynomial is a $2$-dimensional vector space over $\F_p$, we conclude that $S_p$ is not abelian. It is well known that, up to isomorphism, there is a unique non-abelian group of order $p^3$ and exponent $p$, known as the Heisenberg group, see e.g. \cite[page 179]{DF}. Finally, since $\alpha \in \F_p^{*}$ and for each $\alpha$ there are precisely $p^3$ automorphisms $\sigma_{\alpha,\kappa,\epsilon}$, we conclude that $|\aut(\yy_a)|=p^3(p-1)$.
 \end{proof}

 
\section*{Acknowledgements}

The research of Nazar Arakelian was partially supported by grant 2023/03547-2, São Paulo Research Foundation (FAPESP).

\vspace{0,5cm}\noindent {\em Author's addresses}:

\vspace{0.2 cm} \noindent Nazar ARAKELIAN \\
Instituto de Ci\^encias Matem\'aticas e de Computa\c c\~ao
\\ Universidade de S\~ao Paulo \\ Avenida Trabalhador S\~ao-carlense, 400 \\
CEP 13566-590, S\~ao Carlos SP
(Brazil).\\
 E--mail: {\tt n.arakelian@icmc.usp.br}

\text{}

\vspace{0.2 cm} \noindent Diego KIAN \\
Instituto de Matem\'atica e Estat\'istica
\\ Universidade de S\~ao Paulo
 \\Rua do Mat\~ao, 1010 \\
CEP 05508-090, S\~ao Paulo SP
(Brazil).\\
 E--mail: {\tt diego.kian@alumni.usp.br}

\end{document}